\renewcommand{\arraystretch}{1.5}
\newtheorem{theorem}{Theorem}[section]
\newtheorem{remark}[theorem]{Remark}
\newtheorem{lemma}[theorem]{Lemma}
\newtheorem{corollary}[theorem]{Corollary}
\newtheorem{definition}[theorem]{Definition}
\newtheorem{proposition}[theorem]{Proposition}
\numberwithin{equation}{section}
\begin{document}
\title{\bf{ Special anisotropic conformal changes of conic pseudo-Finsler surfaces}}

\author{{\bf Nabil L. Youssef$\,^1$,  Ebtsam H. Taha$^1$,   A. A. Kotb$^2$ and S. G. Elgendi$^{3}$}}
\date{}

\maketitle
\vspace{-1.15cm}
\begin{center}
{$^1$ Department of Mathematics, Faculty of Science,\\
Cairo University, Giza, Egypt}
\vspace{-8pt}
\end{center}

\begin{center}
{$^2$Department of Mathematics, Faculty of Science,\\
Damietta University, Damietta , Egypt}
\vspace{-8pt}
\end{center} 

\begin{center}
 $^{3}$ Department of Mathematics, Faculty of Science,\\ Islamic University of Madinah,  Madinah, Kingdom of Saudi Arabia
\vspace{0.2cm}
\end{center}

\begin{center}
nlyoussef@sci.cu.edu.eg, nlyoussef2003@yahoo.fr\\
ebtsam.taha@sci.cu.edu.eg, ebtsam.h.taha@hotmail.com\\
alikotb@du.edu.eg,  kotbmath2010@yahoo.com\\
salah.ali@fsc.bu.edu.eg, salahelgendi@yahoo.com

\end{center}

\bigskip

\begin{center}
\textit{Dedicated to the memory of Professor Nabil L. Youssef
}\end{center}

\bigskip

\begin{abstract}
 This study presents many special anisotropic conformal changes of a conic pseudo-Finsler surface $(M,F)$, such as $C$-anisotropic and horizontal $C$-anisotropic conformal transformations, which reduce to $C$-conformal when the conformal factor is solely position-dependent.  Furthermore, we present vertical $C$-anisotropic conformal changes and demonstrate that they are characterized by the property of $(M,F)$ being Riemannian.  Additionally, we examine the anisotropic conformal transformation that fulfils the $\phi T$-condition, the horizontal $\phi T$-condition, and the vertical $\phi T$-condition.  The first two conditions reduce to the $\boldsymbol{\sigma} T$-condition when the conformal factor relies solely on a positional variable.  We demonstrate that, under the vertical $\phi T$-condition change, every Landsberg surface is Berwaldian.  Thus, the vertical $\phi T$-condition is equivalent to the $T$-condition.  Furthermore, we examine the scenario when the anisotropic conformal factor becomes the main scalar of the non-Riemannian surface $(M,F)$.  We present an example of a Finslerian Schwarzschild-de Sitter solution having Finslerian spherical symmetry and apply our results to it.
\end{abstract}
\noindent{\bf Keywords:\/}\,  anisotropic conformal change; conic pseudo-Finsler surface;  modified Berwald frame; $C$-conformal; $C$-anisotropic conformal; anisotropic $\phi T$-condition

\medskip\noindent{\bf MSC 2020:\/}  53B40, 53C60

\section{Introduction}
A conformal transformation entails the scaling of a Finsler metric by a smooth positive function known as the conformal factor.   A transformation that relies solely on the manifold coordinates (position) is referred to as an isotropic conformal transformation \cite{elgendi, Hashiguchi76, Knebelman, Ichijyo-Hashiguchi, Bin Shen, Shenbook16}.   Knebelman initially introduced the theory of isotropic conformal transformation of Finsler spaces in 1929 \cite{Knebelman}.  M. Hashiguchi \cite{Hashiguchi76} subsequently introduced a particular transformation termed a $C$-conformal change, which serves as an essential tool in the study of Finsler geometry.  This transformation facilitates the examination of invariant qualities and the classification of special  Finsler spaces.  It is an extension of the concept of concurrent vector fields.  Concurrent vector fields have been extensively studied in the Riemannian geometry.  In 1950, Tachibana \cite{Tachibana} expanded this concept to Finsler geometry and identified the spaces in which such vector fields are exist.  In 2019, N. Youssef et al. presented a general idea known as a semi-concurrent vector field in \cite{Youssef semiconcurrent}.  The Cartan tensor is a fundamental non-Riemannian quantity in Finsler geometry, invariant under isotropic conformal changes, encouraging some geometers to define and explore the $C$-conformal transformation.   In Finsler surfaces \cite{An.In.Ma1993, Bacs-Matsumoto, Berwald41, Matsumoto 2003, YANG-CHENG}, the $C$-conformal transformation is characterized by $F$ being a Riemannian metric.

The $T$-tensor is a crucial object in Finsler geometry which holds significant relevance. The $\boldsymbol{\sigma} T$-condition is particularly important in determining the preservation of features such as the Landsberg or Berwald character under conformal transformations.  Elgendi, in \cite{elgendi}, examined the relationship between the $T$-condition and the $\boldsymbol{\sigma} T$-condition, demonstrating their equivalence in Finsler surfaces.   Furthermore, the Landsberg unicorn's problem is closely related to both $T$-condition and $\boldsymbol{\sigma} T$-condition \cite{elgendi,Elgendi-LBp,Elgendi-solutions,Elgendi-ST_condition}.

On the other hand, when the conformal factor is dependent upon both direction (tangent or velocity vectors) and position, the transformation is referred to as an anisotropic conformal transformation \cite{anisotropic_2014, anisotropic, first_paper, second_paper}.
 Anisotropic conformal transformations of Finsler spaces vary in direction, as opposed to uniform scaling in conformal transformations.   The direction-dependent provides more comprehensive metric deformations, essential for describing directional phenomena such as relativistic spacetime.  The investigation of anisotropic transformations enhances the examination of Finsler spaces.

\bigskip

We introduce and investigate the concepts of $C$-anisotropic and $\overline{C}$-anisotropic conformal changes of a pseudo-Finsler surface in \S \ref{section_$C$-conformal}  due to the fact that the Cartan tensor does not remain invariant under an anisotropic conformal transformation of a conic pseudo-Finsler surface \cite{first_paper}.  In particular, we explore under what conditions the anisotropic conformal transformation of a conic pseudo-Finsler surface is $C$-anisotropic and $\overline{C}$-anisotropic conformal change, and identify necessary and sufficient conditions such that the property of $C$-anisotropic is preserved.   When this property is preserved, we determine the conditions under which $F$ is projectively flat.  \\
 Based on the dependence of the anisotropic conformal factor on position and directional arguments, we introduce and examine the horizontal $C$-anisotropic, horizontal $\overline{C}$-anisotropic, vertical $C$-anisotropic and vertical $\overline{C}$-anisotropic conformal changes.    Also, we demonstrated that the conformal transformation between two non-Riemannian metrics is horizontal $C$-anisotropic and horizontal $\overline{C}$-anisotropic if and only if the geodesic spray is invariant (Theorem \ref{theorem-horizotal-$C$-anisotropic}).  We show that the conformal changes of $C$, $\overline{C}$, horizontal $C$, and horizontal $\overline{C}$- anisotropic are the same when the conformal factor is just a function of $x$ only, but the vertical $C$-anisotropic change is not.  Additionally,   the vertical $C$-anisotropic conformal change is characterized by the property that the Finsler surface is Riemannian. \\
 
\bigskip
 
We are motivated to introduce the $\phi T$-condition, horizontal $\phi T$-condition and vertical $\phi T$-condition in light of the fact that the $T$-tensor is not invariant under an anisotropic conformal transformation of a conic pseudo-Finsler surface (with conformal factor $\phi (x,y)$) \cite{first_paper, second_paper}.  The anisotropic $\phi T$-, horizontal $\phi T$-, and vertical $\phi T$-conditions are investigated and characterized in ~\S \ref{section_phi-$T$-condition}.  Given an anisotropic conformal transformation that satisfies the anisotropic $\phi T$-condition in $(M,F)$ and $(M,\overline{F})$, we determine the conditions under which the Landsberg metric is Berwaldian, as shown in Theorem \ref{in unicorn 1, 2}.  In ~\S \ref{special conformal factor}, we assume that $F$ represents a non-Riemannian metric, with the anisotropic conformal factor being the main scalar $\mathcal{I}$ of $F$.  This specific case for the conformal factor yields interesting results. \\
In \S \ref{Sec_6},  an example of a Finslerian sphere is provided by an anisotropic conformal transformation of a Riemannian sphere.  We examine the conditions under which this transformation meets the criteria for $C$-anisotropic, $\overline{C}$-anisotropic, horizontal $C$-anisotropic and the $\phi T$-condition.  This example represents a Finslerian Schwarzschild-de Sitter solution that possesses the symmetry of a Finslerian sphere.  This is because, the Finslerian analogue of Birkhoff’s theorem states that a Finslerian gravitational field with the symmetry of a "Finslerian sphere" in a vacuum must be static \cite{PRD2014}.  Also, the Einstein field equations including a non-zero cosmological constant have been modified to the Finslerian framework as presented in \cite{Finsler gravity2024}. This modification yields a Finslerian Schwarzschild-de Sitter solution that possesses the symmetry of a Finslerian sphere.   Finally, we end our work by some concluding remarks along with a table which summarizes all special anisotropic conformal transformations we studied in \S \ref{Concluding Remarks}.
  
\section{Notation and preliminaries}

Let $ M $ be an n-dimensional smooth manifold. The tangent bundle of $M$
is denoted by $(TM,\pi_{M},M )$, where $TM$ is the disjoint union of all tangent vectors at each point in base manifold $M$   and $ \pi_{M} : TM \longrightarrow M $ denote the canonical projection from  $TM$ onto $M$. Define $TM_0=TM\setminus(0)$ as the tangent bundle with the zero section removed. Then $(TM_0,\pi_{M},M  )$ is sub-bundle of non zero vectors. On the base manifold $M$, we use local coordinates $(x^{i})$. In the tangent bundle $TM$, the coordinates are given by $(x^{i}, y^i)$ 
, where $x^{i}$ are the base coordinates and $ y^i$ represent the components of the tangent vectors. 

A function  $f \in C^{\infty} (TM_0)$ is said to be  positively homogeneous of degree $r$ in  the directional  argument $y$ and  denoted by $h(r)$ if $f(x,\lambda y)=\lambda^r f(x,y), \quad \forall \lambda>0.$

A conic sub-bundle of $TM$ is a non-empty open subset $\mathcal{A} $ of $TM_0$ which satisfies $ \pi(\mathcal{A}) = M$ and is  invariant under scaling of tangent vectors by positive real numbers. 

\begin{definition}\label{pseudo-Finsler def}
  A conic pseudo-Finsler  metric on $ M $ is  a smooth function
  $\; F:\mathcal{A} \longrightarrow \mathbb{R}$
  which satisfies the following conditions:
  \begin{description}
    \item[(i)] $ F(x,y)$ is $h(1)$: $F(x,\lambda y)=\lambda F(x,y)$  for all  $\lambda \in \mathbb{R^+},$
    \item[(ii)] For each point of $ \mathcal{A}$, the Finsler metric tensor $ g_{ij} (x,y)=\frac{1}{2}\dot{\partial}_{i}\dot{\partial}_{j}F^{2} (x,y),$
    is non-degenerate.
  \end{description}
The pair $(M, F)$ is called a conic pseudo-Finsler manifold.
\end{definition}

For a Finsler metric $F$, a unique nonlinear Cartan (Ehresmann) connection exists on $\mathcal{A} \subset TM$ that is both torsion-free and compatible with $F$. The nonlinear connection coefficients are determined by 
$$ G_i^j = \frac{1}{4} \dot{\partial}_i \left[ g^{jk} \left( y^m \partial_m \dot{\partial}_k F^2 - \partial_k F^2 \right) \right]. $$
This nonlinear connection defines the horizontal derivatives $\delta_i:=\partial_{i}-G^j_i\dot{\partial}_j$, where $\dot{\partial}_i:=\dfrac{\partial}{\partial y^i}$. \\
 The coefficients of the geodesic spray coefficients of $F$ can be expressed as $$ G^i = \frac{1}{4}  g^{ik} \left( y^m \partial_m \dot{\partial}_k F^2 - \partial_k F^2 \right) .$$
It is clear that  $G^i$ are  smooth $h(2)$ functions  in $\mathcal{A}$, moreover,  the geodesic spray can be defined globally on $TM_{0}$ by
 $ S = y^i \partial_i - 2 G^i  \dot{\partial _i}.$

\bigskip

In the case of positive definite of Finsler surface, Berwald introduced a global frame in \cite{Berwald41} called Berwald frame.
Given that the angular metric $h_{ij}$  has a matrix representation $(h_{ij})$  with rank one, we can determine a unique vector field 
 $m_i(x,y)$ and sign $\varepsilon=\pm1$, where 
\begin{equation*}
h_{ij}=\varepsilon m_im_j.
\end{equation*}
As, $g_{ij}=\ell_{i}\ell_{j}+h_{ij}$, where $\ell_i=\dot{\partial_{i}}F$ and $\ell^{i}=\frac{y^i}{F}$. Then, the metric tensor can be expressed as  
\begin{equation}
g_{ij}=\ell_{i}\ell_{j}+\varepsilon m_im_j .
\end{equation}
We denote $\mathfrak{g}=\det(g_{ij})$ and we have the following relations:
\begin{equation}\label{vanishing li mi}
\ell^{i}\ell_{i}=1\qquad \ell^{i}m_{i}=\ell_{i}m^{i}=0,\qquad m^{i}m_{i}=\varepsilon.
\end{equation} 
Hence, $(\ell^i,m^i)$ is orthonormal frame, called modified Berwald frame introduced by Báscó and Matsumoto \cite{Bacs-Matsumoto} which is suitable for both positive definite and non-positive definite surfaces.\\

The main scalar $\mathcal{I}(x,y)$ is one of the most significant quantities in Finsler surfaces, it is an $h(0)$  smooth function defined from the Cartan tensor \cite{Berwald41} by,
\begin{equation}\label{cartan tensor Finsler surface} 
 F C_{ijk}=\mathcal{I}\;m_{i}m_{j}m_{k}.
\end{equation}

For a smooth $f\in C^{\infty}(TM_0)$ we define the v-scalar derivatives $(f_{;1}, f_{;2})$ and 
h-scalar derivatives $(f_{,1}, f_{,2})$ in $(M,F)$ as follows:
\begin{equation}\label{vertical and horizontal deri of f}
F\dot{\partial}_i f=f_{;1}\ell_{i}+f_{;2}m_{i}, \qquad \delta_i f=f_{,1}\ell_{i}+f_{,2}m_{i},
\end{equation}
where
\begin{align*}
f_{;1}=y^{i}\dot{\partial}_i f,\quad\qquad f_{;2}=\varepsilon F(\dot{\partial}_{i}f)m^{i},
\qquad f_{,1}=(\delta_{i}f)\ell^{i},\quad\qquad f_{,2}=\varepsilon (\delta_{i}f)m^{i}.
\end{align*}
Specifically, if $ f$ is $ h(r) $, then  $f_{;1}=rf$, the commutation formulas \cite{Berwald41} are given by:
\begin{align}
 f_{,1,2}-f_{,2,1}=&-Rf_{;2},\label{first comutation}\\
 f_{,1;2}-f_{;2,1}=&f_{,2},\label{second comutation}\\
f_{,2;2}-f_{;2,2}=&-\varepsilon(f_{,1}+\mathcal{I}f_{,2}+\mathcal{I}_{,1}f_{;2}),\label{third comutation}
\end{align}
where R is said to be the Gauss curvature or the h-scalar curvature.

\begin{definition}\emph{\cite{first_paper}}
The anisotropic conformal change of  a conic pseudo-Finsler metric $F $ is defined by
\begin{equation}\label{the anisotropic conformal transformation}
 F\longmapsto \overline{F}(x,y)=e^{\phi(x,y)}F(x,y), \quad F^{2}(\dot{\partial}_{i}\dot{\partial}_{j}\phi\
 +(\dot{\partial}_{i}\phi)(\dot{\partial}_{j}\phi)) m^{i}m^{j}+\varepsilon=\varepsilon+\sigma-(\phi_{;2})^2\neq0,
 \end{equation}
  \begin{align}
  \sigma&=\phi_{;2;2}+\varepsilon \mathcal{I}\phi_{;2}+2\,(\phi_{;2})^{2},\label{formula of sigma}
\end{align}
    given that the conformal factor $ \phi(x,y)$ is a smooth $h(0)$-function on $\mathcal{A}$.
In this case,  $F$ is said to be anisotropic conformally changed to $\overline{F}$. In addition, we say that the anisotropic conformal transformation is proper if the conformal factor is a non-isotropic and non-homothetic function which means $\phi_{;2} \neq 0$.
\end{definition}

Now, we define the v-scalar derivatives $(f_{;\,a}, f_{;\,b})$ and h-scalar derivatives $(f_{,\,a}, f_{,\,b})$ in $(M,\overline{F})$ for  $f$ by:
\begin{align*}
\overline{F}\dot{\partial}_i f=f_{;\,a}\overline{\ell}_{i}+f_{;\,b}\overline{m}_{i}, \qquad \overline{\delta}_i f=f_{,\,a}\overline{\ell}_{i}+f_{,\,b}\overline{m}_{i},
\end{align*}
 where
\begin{align*}
f_{;\,a}=y^{i}\dot{\partial}_i f,\qquad \quad f_{;\,b}=\varepsilon \overline{F}(\dot{\partial}_{i}f)\overline{m}^{i},\qquad 
f_{,\,a}=(\overline{\delta}_{i}f)\overline{\ell}^{i}, \qquad \quad f_{,\,b}=\varepsilon (\overline{\delta}_{i}f)\overline{m}^{i}.
\end{align*}
In \cite{first_paper}, we discuss the anisotropic conformal change on a conic pseudo-Finsler surface $F$ equipped with modified Berwald frame and determined how this change affects the components of the Berwald frame, the main scalar and the geodesic spray coefficients of $\overline{F}$ as follows:
\begin{align}\label{transform of Berwald frame}
  \overline{\ell}_{i}=e^{\phi}[\ell_{i}+\phi_{;2}\; m_{i}],\qquad
   \overline{\ell}^{i}=e^{-\phi}\ell^{i},\qquad
  \overline{m}_{i}=e^{\phi}\sqrt{\frac{\varepsilon}{\rho}}m_{i},\qquad
  \overline{m}^{i}=e^{-\phi}\sqrt{\varepsilon\rho}[m^{i}-\varepsilon \phi_{;2}\;\ell^{i}].
  \end{align}
  \begin{align} 
  \overline{\mathcal{I}}&=\sqrt{\varepsilon\rho}[\mathcal{I}+2\varepsilon\phi_{;2}-\frac{\varepsilon\rho_{;2}}{2\rho}].\label{transform of main scalar 2}\\
  \overline{G}^{i}&=G^i+Qm^i+P\ell^i,\label{transform of coefficient spray}
  \end{align}
where 
  \begin{align}
\rho&=\frac{1}{\sigma+\varepsilon-(\phi_{;2})^{2}}.\label{formula of rho}\\
2Q&=\varepsilon\rho F^2(\phi_{;2}\phi_{,1}+\phi_{,1;2}-2\phi_{,2}),\label{formula of Q only}\\
2P&=-\rho F^2\phi_{;2}(\phi_{;2}\phi_{,1}+\phi_{,1;2}-2\phi_{,2})+F^2\phi_{,1}.\label{formula of P only}
\end{align}
Furthermore, from \eqref{formula of Q only} and \eqref{formula of P only}, we get
\begin{equation}\label{relation between P and Q}
2\varepsilon \phi_{;2}Q+2P=F^2\phi_{,1}.
\end{equation}

\begin{lemma}\label{lemma of condition to homthety}
    Let $(M,F)$ be Finsler surface. Each smooth  function  $f$ is  on $M$ satisfies $m^i\partial_i f=0$ is  constant.
\end{lemma}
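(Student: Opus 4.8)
The plan is to read the hypothesis $m^{i}\partial_{i}f=0$ as a pointwise constraint on the differential $df$ and to exploit the fact that the modified Berwald coframe $(\ell_{i},m_{i})$ depends on the fibre variable $y$ whereas $f$, being a function on $M$, does not. First I would expand $\partial_i f$ in the coframe. Since $(\ell^{i},m^{i})$ is a frame satisfying the duality relations \eqref{vanishing li mi}, every covector $\omega_{i}$ decomposes as $\omega_{i}=(\omega_{j}\ell^{j})\,\ell_{i}+\varepsilon(\omega_{j}m^{j})\,m_{i}$. Taking $\omega_{i}=\partial_{i}f$ and inserting the hypothesis $m^{j}\partial_{j}f=0$ annihilates the $m_{i}$-component, leaving
$$\partial_{i}f=A\,\ell_{i},\qquad A:=\ell^{j}\partial_{j}f=\frac{y^{j}\partial_{j}f}{F},$$
an identity valid at every $(x,y)\in\mathcal{A}$; note that $A$ is $h(0)$ while the left-hand side is independent of $y$.

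The key step is to differentiate this identity vertically. Applying $\dot{\partial}_{k}$ and using $\dot{\partial}_{k}f=0$ (because $f$ lives on $M$), together with $\ell_{i}=\dot{\partial}_{i}F$ and the standard relation $\dot{\partial}_{k}\ell_{i}=\dot{\partial}_{k}\dot{\partial}_{i}F=\tfrac{\varepsilon}{F}\,m_{k}m_{i}$ (equivalently $h_{ij}=F\,\dot{\partial}_{i}\dot{\partial}_{j}F$ with $h_{ij}=\varepsilon m_{i}m_{j}$), I would obtain
$$0=(\dot{\partial}_{k}A)\,\ell_{i}+\frac{\varepsilon A}{F}\,m_{k}m_{i}.$$
Contracting with $m^{i}$ and using $\ell_{i}m^{i}=0$ and $m_{i}m^{i}=\varepsilon$ from \eqref{vanishing li mi} makes the first term vanish and yields $\tfrac{A}{F}\,m_{k}=0$ for every $k$. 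Since $m_{k}$ is not identically zero (indeed $m^{i}m_{i}=\varepsilon\neq0$), this forces $A=0$. Feeding $A=0$ back into $\partial_{i}f=A\,\ell_{i}$ gives $\partial_{i}f=0$ at every point of $\pi(\mathcal{A})=M$, so $f$ is locally constant and hence constant on the connected surface $M$.

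I expect the main obstacle to be conceptual rather than computational: one must notice that the hypothesis, holding for \emph{all} directions $y$ in the conic domain, is far stronger than a single pointwise condition, and that the right mechanism to extract this strength is vertical differentiation of the coframe expansion, which converts the clause ``for all $y$'' into the algebraic conclusion $A=0$. A purely geometric alternative, namely choosing two linearly independent directions $y_{1},y_{2}$ so that $m(y_{1}),m(y_{2})$ form a basis of $T_{x}M$ annihilated by $df$, is tempting but delicate for a conic pseudo-Finsler surface, since the metric tensor $g_{ij}$ itself varies with $y$ and strict convexity of the indicatrix is unavailable; the computational route above avoids these issues entirely and uses only non-degeneracy and the relations \eqref{vanishing li mi}.
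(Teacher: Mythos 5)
Your proof is correct, but it takes a genuinely different route from the paper's. The paper works entirely inside the h/v-scalar calculus: from $f\in C^{\infty}(M)$ it gets $f_{;1}=f_{;2}=0$, from the hypothesis it gets $f_{,2}=0$, and then it invokes the third commutation formula \eqref{third comutation}, whose right-hand side collapses to $-\varepsilon f_{,1}$, forcing $f_{,1}=0$ and hence $\delta_i f=\partial_i f=0$. You instead never touch the nonlinear connection: you expand $\partial_i f=A\ell_i$ via the duality relations \eqref{vanishing li mi}, differentiate vertically using $F\dot{\partial}_k\dot{\partial}_iF=h_{ki}=\varepsilon m_km_i$, and contract with $m^i$ to get $A\,m_k/F=0$, hence $A=0$. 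Both arguments are sound; yours is more elementary and self-contained, since the paper's route implicitly carries the main scalar $\mathcal{I}$ and the horizontal derivative machinery into a statement that, as you show, only needs the vertical geometry of the Berwald coframe. Your mechanism is in fact the same one underlying the paper's own Remark \ref{remark of vector field of x only} (a covector field $C\ell_i+Dm_i$ depending on $x$ only with $D=0$ must have $C=0$), so it is fully consistent with the paper's toolkit. Two minor points worth making explicit: the conclusion ``constant'' requires $M$ connected (which the paper also assumes tacitly), and the step from $A\,m_k=0$ to $A=0$ uses that $m_k$ is nowhere the zero covector, which you correctly justify from $m^im_i=\varepsilon\neq0$.
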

\begin{proof}
  Since $f \in C^{\infty}(M)$, then we have $\delta_{i}f =\partial_{i}f \text{ and } \delta_{i}f =f_{,1}\ell_{i}+f_{,2}m_{i}.$ Then, $m^i\partial_i f=\varepsilon f_{,2}$. Moreover, if $m^i\partial_i f=0$, we obtain $f_{;1}=f_{;2}=f_{,2}=0$.  Thereby, using the commutation formula \eqref{third comutation}, we get $f_{,1} =0$. Consequently, $f$ is a constant function.
\end{proof}

\section{$C$-anisotropic and horizontal $C$-anisotropic change }\label{section_$C$-conformal}
Hashiguchi \cite{Hashiguchi76} introduced the concept of $C$-conformal transformation in Finsler geometry as a special form of a conformal transformation. This class is a subset of conformal transformations, distinguished by specific conditions on the Cartan tensor and the conformal factor. In Finsler surfaces, the $C$-conformal transformation is characterized by the fact that $F$ is a Riemannian metric. More precisely,
\begin{definition} \emph{\cite{Hashiguchi76}}
The (isotropic) conformal change $\overline{F}(x,y)=e^{\theta (x)}F(x,y)$ is said to be $C$-conformal if it is not a homothetic change and satisfies
\begin{align*}\label{def of $C$-conformal}
C^i_{jk}\partial_i\theta=0.
\end{align*}
\end{definition}

\begin{lemma}
The conformal change of a two-dimensional conic pseudo-Finsler metric $F$ defined as $\overline{F}=e^{\theta }F$ is $C$-anisotropic conformal if and only if $F$ is Riemannian.
\end{lemma}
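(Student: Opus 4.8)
The plan is to translate the tensorial defining condition $C^{i}_{jk}\partial_{i}\theta=0$ into a single scalar equation by working in the modified Berwald frame. Since $\theta=\theta(x)$ depends on position only, we have $\dot{\partial}_{i}\theta=0$, hence $\delta_{i}\theta=\partial_{i}\theta$, and the second relation in \eqref{vertical and horizontal deri of f} supplies the coframe expansion $\partial_{i}\theta=\theta_{,1}\ell_{i}+\theta_{,2}m_{i}$. Raising the index with $g^{il}$ and using $g^{il}\ell_{i}=\ell^{l}$, $g^{il}m_{i}=m^{l}$ gives $\partial^{l}\theta=\theta_{,1}\ell^{l}+\theta_{,2}m^{l}$, so that the condition is equivalent to $C_{ljk}\partial^{l}\theta=0$.

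First I would feed in the surface Cartan tensor \eqref{cartan tensor Finsler surface}, namely $C_{ljk}=\tfrac{\mathcal{I}}{F}\,m_{l}m_{j}m_{k}$, and contract it against $\partial^{l}\theta$. The orthonormality relations \eqref{vanishing li mi} immediately give $C_{ljk}\ell^{l}=\tfrac{\mathcal{I}}{F}(m_{l}\ell^{l})m_{j}m_{k}=0$ and $C_{ljk}m^{l}=\tfrac{\varepsilon\mathcal{I}}{F}m_{j}m_{k}$, so that
\begin{equation*}
C^{i}_{jk}\partial_{i}\theta=C_{ljk}\partial^{l}\theta=\frac{\varepsilon\,\mathcal{I}\,\theta_{,2}}{F}\,m_{j}m_{k}.
\end{equation*}
Because $m_{j}m_{k}$ does not vanish identically (the $m_{i}$ form part of a frame), the $C$-conformal condition reduces to the scalar identity $\mathcal{I}\,\theta_{,2}=0$ on $\mathcal{A}$.

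It then remains to read off both implications. If $F$ is Riemannian then $\mathcal{I}=0$, so $\mathcal{I}\theta_{,2}=0$ holds automatically and every non-homothetic change $\overline{F}=e^{\theta}F$ is $C$-conformal. Conversely, assume the change is $C$-conformal, hence non-homothetic, and suppose for contradiction that $F$ were not Riemannian, so $\mathcal{I}\neq0$. Then $\mathcal{I}\theta_{,2}=0$ forces $\theta_{,2}=0$, i.e. $m^{i}\partial_{i}\theta=\varepsilon\theta_{,2}=0$, whereupon Lemma \ref{lemma of condition to homthety} shows $\theta$ is constant and the change is homothetic — a contradiction. Hence $F$ must be Riemannian. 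The one step deserving care is precisely this last one: the relation $\mathcal{I}\theta_{,2}=0$ forces $\theta_{,2}=0$ only where $\mathcal{I}\neq0$, so one must use that $\mathcal{I}$ is an $h(0)$ function, nonzero off a lower-dimensional set for a non-Riemannian surface, to secure $\theta_{,2}\equiv0$ before invoking the lemma; the frame expansion of $\partial_{i}\theta$ and the two contractions are otherwise routine given \eqref{cartan tensor Finsler surface} and \eqref{vanishing li mi}.
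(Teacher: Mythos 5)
Your proposal is correct and follows essentially the same route as the paper: contract the surface Cartan tensor \eqref{cartan tensor Finsler surface} against the frame expansion of $\partial_i\theta$ to reduce the condition to $\mathcal{I}\,m^i\partial_i\theta=0$, then rule out $m^i\partial_i\theta=0$ via Lemma \ref{lemma of condition to homthety} and the non-homothety assumption. The only differences are cosmetic: you also record the trivial converse and flag the pointwise-versus-identical vanishing of $\mathcal{I}$, a subtlety the paper's proof passes over silently.
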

\begin{proof}
Let  $\overline{F}=e^{\theta}F$ be a $C$-conformal transformation, then $$0=C^i_{jk}\partial_{i}\theta=(\partial_{i}\theta)\frac{\mathcal{I}}{F}m^im_jm_k,$$
which is equivalent to either $\mathcal{I}=0$ ($F$ is Riemannian) or $m^i\partial_i\theta=0.$
Since $\theta$ is a function of $x$ only, then, by Lemma \ref{lemma of condition to homthety}, $\theta$ is  constant which is a contradiction. 
\end{proof}

  It should be noted that, in the case of a conformal transformation, $\overline{C}^{i}_{jk}=C^{i}_{jk}$ as the conformal factor depends only on the position only.  However, in the case of the anisotropic conformal transformation \eqref{the anisotropic conformal transformation}, we have  $\overline{C}^{i}_{jk} \neq C^{i}_{jk}$ as the conformal factor depends on both $x$ and $y$. This inspires us to introduce some special anisotropic conformal transformations, namely, $C$-anisotropic conformal and horizontal $C$-anisotropic conformal transformations. We also discuss under what conditions these transformations are preserved.
 \begin{definition}
 The proper anisotropic conformal change $\overline{F}=e^{\phi}F$ is said to be $C$-anisotropic conformal if it satisfies
$C^i_{jk}\partial_i\phi=0.$  Similarly, the anisotropic conformal change $\overline{F}=e^{\phi}F$ is said to be $\overline{C}$-anisotropic conformal   if it satisfies
$\overline{C}^i_{jk}\partial_i\phi=0.$
\end{definition}

\begin{proposition}\label{proposition1 of bar$FC$-conformal}
Let $F$ be conformally anisotropic changed to $\overline{F}$ by \eqref{the anisotropic conformal transformation}. Then, we have:
\begin{description}
    \item[(i)] The anisotropic conformal transformation is $C$-anisotropic conformal if and only if either $F$ is a Riemannian metric or $m^i\partial_i\phi=0$.
    \item[(ii)] The anisotropic conformal transformation is $\overline{C}$-anisotropic conformal if and only if either $\mathcal{I}=\frac{\varepsilon}{2}(\ln{\rho}-4\phi)_{;2}$ ($\overline{F}$ is a Riemannian metric) or $m^i\partial_i\phi-\varepsilon\phi_{;2}\ell^i\partial_i\phi=0$. 
    \item[(iii)] Assume that \eqref{the anisotropic conformal transformation} is an anisotropic conformal transformation between two non-Riemannian metrics. The property of $C$-anisotropic conformal is invariant if and only if $\ell^i\partial_i\phi=0.$
\end{description} 
\end{proposition}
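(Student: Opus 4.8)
The plan is to exploit the rank-one structure of the Cartan tensor on a surface recorded in \eqref{cartan tensor Finsler surface}, which collapses every contraction of the form $C^i_{jk}\partial_i\phi$ into a single scalar factor. Raising the first index with $g^{il}$ and using $m^i=g^{il}m_l$, equation \eqref{cartan tensor Finsler surface} becomes $C^i_{jk}=\frac{\mathcal{I}}{F}\,m^i m_j m_k$, so that
\begin{equation*}
C^i_{jk}\partial_i\phi=\frac{\mathcal{I}}{F}\,(m^i\partial_i\phi)\,m_j m_k .
\end{equation*}
Since $\frac{1}{F}\neq 0$ and $m_j m_k$ is not identically zero, this vanishes precisely when $\mathcal{I}=0$ or $m^i\partial_i\phi=0$. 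Recalling from \eqref{cartan tensor Finsler surface} that $\mathcal{I}=0$ is equivalent to $C_{ijk}=0$, i.e. to $F$ being Riemannian, this immediately yields part \textbf{(i)}.

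For part \textbf{(ii)} I would repeat the computation in the barred geometry. The analogue of \eqref{cartan tensor Finsler surface} for $\overline{F}$ is $\overline{F}\,\overline{C}_{ijk}=\overline{\mathcal{I}}\,\overline{m}_i\overline{m}_j\overline{m}_k$, hence $\overline{C}^i_{jk}\partial_i\phi=\frac{\overline{\mathcal{I}}}{\overline{F}}(\overline{m}^i\partial_i\phi)\,\overline{m}_j\overline{m}_k$, which vanishes iff $\overline{\mathcal{I}}=0$ or $\overline{m}^i\partial_i\phi=0$. It then remains to rewrite these two alternatives in terms of data on $(M,F)$. Setting the bracket in \eqref{transform of main scalar 2} to zero and using $(\ln\rho)_{;2}=\rho_{;2}/\rho$ together with the linearity of the $v$-scalar derivative turns $\overline{\mathcal{I}}=0$ into $\mathcal{I}=\frac{\varepsilon}{2}(\ln\rho-4\phi)_{;2}$, the stated condition for $\overline{F}$ to be Riemannian; and substituting the expression for $\overline{m}^i$ from \eqref{transform of Berwald frame} into $\overline{m}^i\partial_i\phi=0$, after dropping the nonzero factor $e^{-\phi}\sqrt{\varepsilon\rho}$, gives $m^i\partial_i\phi-\varepsilon\phi_{;2}\,\ell^i\partial_i\phi=0$.

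Finally, part \textbf{(iii)} follows by comparing the two criteria under the non-Riemannian hypothesis. Since $\mathcal{I}\neq 0$ and $\overline{\mathcal{I}}\neq 0$, parts (i) and (ii) say that the change is $C$-anisotropic iff $m^i\partial_i\phi=0$ and $\overline{C}$-anisotropic iff $m^i\partial_i\phi-\varepsilon\phi_{;2}\,\ell^i\partial_i\phi=0$. Reading invariance of the $C$-anisotropic property as the coincidence of these two conditions, their difference is the single term $\varepsilon\phi_{;2}\,\ell^i\partial_i\phi$; since the change is proper we have $\phi_{;2}\neq 0$, so the conditions agree exactly when $\ell^i\partial_i\phi=0$. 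I expect the main obstacle to be the bookkeeping in part (ii): correctly carrying the conformal weights and the factor $\sqrt{\varepsilon\rho}$ through the substitution of \eqref{transform of Berwald frame}, and verifying that the rearrangement of $\overline{\mathcal{I}}=0$ from \eqref{transform of main scalar 2} reproduces the precise combination $(\ln\rho-4\phi)_{;2}$ rather than a sign- or factor-shifted variant.
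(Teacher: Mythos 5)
Your proposal is correct and follows essentially the same route as the paper's proof: contracting the rank-one Cartan tensors $C^i_{jk}=\frac{\mathcal{I}}{F}m^im_jm_k$ and $\overline{C}^i_{jk}$ with $\partial_i\phi$, substituting the frame transformations \eqref{transform of Berwald frame} and the main-scalar formula \eqref{transform of main scalar 2}, and then comparing the two resulting conditions under $\phi_{;2}\neq0$ for part \textbf{(iii)}. The bookkeeping you flag as the main risk in part \textbf{(ii)} works out exactly as you anticipate, matching the paper's computation.
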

\begin{proof}
\begin{description}
    \item[(i)]The anisotropic conformal transformation is $C$-anisotropic conformal if and only if 
\begin{equation*}
0=C^i_{jk}\partial_{i}\phi=\mathcal{I}m^im_jm_k \partial_{i}\phi.
\end{equation*}
From which we have $\mathcal{I}m^i\partial_{i}\phi=0$. That is,  the anisotropic conformal transformation \eqref{the anisotropic conformal transformation} is $C$-anisotropic conformal if and only if either $F$ is a Riemannian metric or $m^i\partial_i\phi=0$.
\item[(ii)] The anisotropic conformal transformation is $\overline{C}$-anisotropic conformal   if and only if 
\begin{equation*}
0=\overline{C}^i_{jk}\partial_{i}\phi=\mathcal{\overline{I}}(m^im_jm_k-\varepsilon\phi_{;2}\ell^im_jm_k) \partial_{i}\phi.
\end{equation*}
Therefore, the anisotropic conformal transformation \eqref{the anisotropic conformal transformation} is $\overline{C}$-anisotropic conformal   if and only if either $\mathcal{\overline{I}}=0$ or $m^i\partial_i\phi-\varepsilon\phi_{;2}\ell^i\partial_i\phi=0.$ By making use of \eqref{transform of main scalar 2},   \eqref{the anisotropic conformal transformation} is $\overline{C}$-anisotropic conformal   if and only if either $\mathcal{I}=\frac{\varepsilon}{2}[\frac{\rho_{;2}}{\rho}-4\phi_{;2}]=\frac{\varepsilon}{2}(\ln{\rho}-4\phi)_{;2}$ or $m^i\partial_i\phi-\varepsilon\phi_{;2}\ell^i\partial_i\phi=0.$ 
\item[(iii)] Consider the anisotropic conformal change \eqref{the anisotropic conformal transformation} between  non-Riemannian metrics $F$ and $\overline{F}$. Now, we have:

$(\Longrightarrow)$ If the property of $C$-anisotropic conformal is invariant, that is, our anisotropic conformal transformation is both $C$-anisotropic and $\overline{C}$-anisotropic transformation, then we have $m^i\partial_i\phi=0$ and $m^i\partial_i\phi-\varepsilon\phi_{;2}\ell^i\partial_i\phi=0$. Since $\phi_{;2}\neq0$, we conclude  $\ell^i\partial_i\phi=0.$

$(\Longleftarrow)$ If $\ell^i\partial_i\phi=0$ and the anisotropic conformal change \eqref{the anisotropic conformal transformation} is $C$-anisotropic change (i.e., $m^i\partial_i\phi=0$, by \textbf{(i)} above), then, since $\phi_{;2}\neq0$, we  have $m^i\partial_i\phi-\varepsilon\phi_{;2}\ell^i\partial_i\phi=0$. That is, \eqref{the anisotropic conformal transformation} is $\overline{C}$-anisotropic conformal change.
\end{description}
\end{proof}
     
\begin{remark}\label{projective surfaces}
    A Finsler surface $(M,F)$ is projectively flat if and only if it satisfies the Hamel’s equation $(\dot{\partial}_1\partial_{2}F=\dot{\partial}_2\partial_{1}F)$ \emph{\cite[Eqn. (2.70)]{Guo-Mo-book}}. The geodesic coefficient of the two-dimensional metric $F$ is written as 
$2G^i=y^r(\partial_r F)\ell^i+\frac{F^2(\dot{\partial_2}\partial_1 F-\dot{\partial_1}\partial_2 F)}{h}m^i,$ where $h=\frac{\varepsilon}{\ell^1m^2-\ell^1m^1}$ \emph{ \cite[Eqn. (1.4)]{Matsumoto 2003}.} Consequently, the Finsler surface $(M,F)$ is projectively flat if and only if $G^km_k=0.$
\end{remark}
\begin{proposition}
     Assuming that $\overline{F}=e^{\phi}F$ is an anisotropic conformal transformation between two non-Riemannian metrics, we have the following:
     \begin{description}
         \item[(i)]If $F$ is a locally Minkowski metric and the anisotropic conformal transformation \eqref{the anisotropic conformal transformation} is $\overline{C}$-anisotropic conformal change, then the two sprays $S$ and $\overline{S}$ are projectively equivalent if and only if $\phi_{;2,1}=0$ in some coordinate system.
         \item[(ii)] If $F$ is projectively flat metric, then the anisotropic conformal transformation \eqref{the anisotropic conformal transformation} is $\overline{C}$-anisotropic if and only if $\phi_{,2} = \phi_{;2}\left[\phi_{,1} - \frac{G^k}{F^2}\ell_k\right]$. Moreover,  the anisotropic conformal transformation \eqref{the anisotropic conformal transformation} is $C$-anisotropic if and only if $\phi_{,2}=-\frac{\phi_{;2}}{F^2}G^km_k. $
         \item[(iii)] If the property of $C$-anisotropic conformal is preserved, then $F$ is projectively flat  if and only if  $\phi$ is a first integral of the geodesic spray $S.$
     \end{description}
\end{proposition}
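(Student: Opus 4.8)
All three parts reduce to reading the Cartan-type conditions of the previous proposition inside the modified Berwald frame and feeding them into the spray transformation \eqref{transform of coefficient spray}. Three structural observations drive the argument. First, by \eqref{transform of coefficient spray} we have $\overline{G}^i-G^i=Qm^i+P\ell^i$, which is proportional to $y^i=F\ell^i$ exactly when its $m^i$-component vanishes; hence $S$ and $\overline{S}$ are projectively equivalent if and only if $Q=0$. Second, by Remark~\ref{projective surfaces}, $F$ is projectively flat if and only if $G^km_k=0$. Third, since $\phi$ is $h(0)$ we have $\dot\partial_i\phi=\frac{\phi_{;2}}{F}m_i$, and since $S=y^i\delta_i$ (from $y^iG^j_i=2G^j$) one gets $S(\phi)=y^i(\phi_{,1}\ell_i+\phi_{,2}m_i)=F\phi_{,1}$, so $\phi$ is a first integral of $S$ if and only if $\phi_{,1}=0$. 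The computational engine for (ii) and (iii) is the expansion $\partial_i=\delta_i+G^j_i\dot\partial_j$, evaluated with $y^iG^j_i=2G^j$ and the frame formula
\begin{equation*}
F\dot\partial_k m_i=\varepsilon\mathcal{I}\,m_km_i-m_k\ell_i,
\end{equation*}
which I would establish by differentiating $\ell^im_i=0$ and $g_{ij}=\ell_i\ell_j+\varepsilon m_im_j$ and matching \eqref{cartan tensor Finsler surface}.

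For part (i), local Minkowski flatness means $G^i\equiv0$ in the adapted chart, so $G^j_i\equiv0$ and $\delta_i=\partial_i$; thus $m^i\partial_i\phi=\varepsilon\phi_{,2}$ and $\ell^i\partial_i\phi=\phi_{,1}$, and the $\overline{C}$-anisotropic condition $m^i\partial_i\phi-\varepsilon\phi_{;2}\ell^i\partial_i\phi=0$ becomes $\phi_{,2}=\phi_{;2}\phi_{,1}$. By the first observation, projective equivalence is $Q=0$, and \eqref{formula of Q only} makes this $\phi_{;2}\phi_{,1}+\phi_{,1;2}-2\phi_{,2}=0$. I would then use the commutation formula \eqref{second comutation} in the form $\phi_{,1;2}=\phi_{;2,1}+\phi_{,2}$ to rewrite $Q=0$ as $\phi_{;2,1}=\phi_{,2}-\phi_{;2}\phi_{,1}$; substituting the $\overline{C}$-relation $\phi_{,2}=\phi_{;2}\phi_{,1}$ kills the right-hand side and yields $Q=0\iff\phi_{;2,1}=0$, the chart being precisely the locally Minkowski one.

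For part (ii), assume $F$ projectively flat, so $G^km_k=0$. Expanding as above gives $m^i\partial_i\phi=\varepsilon\phi_{,2}+\frac{\phi_{;2}}{F}m^im_jG^j_i$ and $\ell^i\partial_i\phi=\phi_{,1}+\frac{\phi_{;2}}{F}\ell^im_jG^j_i$. The $\ell$-contraction is immediate from $y^iG^j_i=2G^j$: $\ell^im_jG^j_i=\frac{2}{F}G^km_k=0$, so $\ell^i\partial_i\phi=\phi_{,1}$. The $m$-contraction $m^im_jG^j_i$ I would compute from the frame formula together with $\dot\partial_iG^j=\dot\partial_i\big((G^k\ell_k)\ell^j+\varepsilon(G^km_k)m^j\big)$; after imposing $G^km_k=0$ it collapses to $\frac{\varepsilon}{F}G^k\ell_k$, whence $m^i\partial_i\phi=\varepsilon\big(\phi_{,2}+\frac{\phi_{;2}}{F^2}G^k\ell_k\big)$. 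The $C$-anisotropic condition $m^i\partial_i\phi=0$ then reads $\phi_{,2}=-\frac{\phi_{;2}}{F^2}G^k\ell_k$, while the $\overline{C}$-anisotropic condition $m^i\partial_i\phi-\varepsilon\phi_{;2}\ell^i\partial_i\phi=0$ gives the stated $\phi_{,2}=\phi_{;2}\big(\phi_{,1}-\frac{G^k}{F^2}\ell_k\big)$.

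For part (iii), preservation of the $C$-anisotropic property is, by the previous proposition, equivalent to $\ell^i\partial_i\phi=0$; by the same expansion (now without assuming projective flatness) this is $\phi_{,1}+\frac{2\phi_{;2}}{F^2}G^km_k=0$. Together with first-integrality being $\phi_{,1}=0$, projective flatness being $G^km_k=0$, and $\phi_{;2}\neq0$ for a proper change, the two quantities $\phi_{,1}$ and $G^km_k$ must vanish simultaneously, so $F$ is projectively flat if and only if $\phi$ is a first integral of $S$. I expect the only real obstacle to be the $m$-contraction $m^im_jG^j_i$ in part (ii): it is the single place where the vertical derivative of the frame vector $m_i$ and the careful separation of the $G^k\ell_k$ and $G^km_k$ components cannot be avoided, whereas parts (i) and (iii) then reduce to bookkeeping with the commutation and homogeneity identities.
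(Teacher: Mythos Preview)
Your proposal is correct and follows essentially the same route as the paper: both arguments expand $\partial_i\phi$ via $\partial_i=\delta_i+G^j_i\dot\partial_j$ in the Berwald frame to obtain the equivalences for $m^i\partial_i\phi$, $\ell^i\partial_i\phi$, then feed in $G^i=0$ (locally Minkowski) or $G^km_k=0$ (projectively flat) and the commutation formula \eqref{second comutation}. The only cosmetic difference is in part~(ii): the paper computes $G^j_im^im_j$ by differentiating the constraint $G^km_k=0$ directly, whereas you expand $G^j=(G^k\ell_k)\ell^j+\varepsilon(G^km_k)m^j$ first and then differentiate---both give $G^j_im^im_j=\frac{\varepsilon}{F}G^k\ell_k$. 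Note also that your computation yields $\phi_{,2}=-\frac{\phi_{;2}}{F^2}G^k\ell_k$ for the $C$-anisotropic condition, agreeing with the paper's \emph{proof} rather than the stated $G^km_k$, which is a typo in the statement (since $G^km_k=0$ under the hypothesis).
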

\begin{proof}
For a Finsler surface equipped with modified Berwald frame, we have, by \cite[Eqn. (4.8)]{first_paper}, the following equalities:
\begin{equation}
\renewcommand{\arraystretch}{1.5}
  \left.\begin{array}{r@{\;}l}
{\textbf{(a)}}& F^2\ell^k\,\partial_k\phi=F^2\phi_{,1}+2G^k\phi_{;2}\;m_k\\
{\textbf{(b)}}& F m^k\,\partial_k\phi=\varepsilon F\,\phi_{,2}+G^i_{k}\;\phi_{;2}\;m^k m_i\\
  \end{array}\right\} \label{eq}
\end{equation}
This leads to the equivalence:
\begin{align}
m^i\partial_i\phi-\varepsilon\phi_{;2}\ell^i\partial_i\phi=0&\Longleftrightarrow \phi_{,2}=\phi_{;2}[\phi_{,1}+\frac{2\phi_{;2}}{F^2}G^km_k-\frac{\varepsilon }{F}G^i_km^km_{i}],\label{equivalence 1 of C bar}\\
m^i\,\partial_i\phi=0&\Longleftrightarrow \phi_{,2}=-\frac{\varepsilon\phi_{;2}}{F}G^i_{k}m^k m_i,\label{equivalence 2 of C bar}\\
       \ell^i\partial_i\phi=0&\Longleftrightarrow \phi_{,1}=-\frac{2}{F^2}\phi_{;2}G^k\;m_k.\label{equivalence 3 of C bar}
  \end{align}
\begin{description}
\item[(i)] Let $F$ be a locally Minkowski metric $F$ (i.e.,  $G^i=0$ in some coordinate system \cite[Definition 5.3]{Shenbook16}) and  \eqref{the anisotropic conformal transformation} be $\overline{C}$-anisotropic. By Proposition \ref{proposition1 of bar$FC$-conformal} \textbf{(ii)} and \eqref{equivalence 1 of C bar}, we have
\begin{equation}\label{reduction of Q}
\phi_{,2}=\phi_{;2}\phi_{,1}.
\end{equation}
Using the commutation formula \eqref{second comutation} and \eqref{formula of Q only}, along with \eqref{reduction of Q}, we get 
  \begin{equation*}
      Q=\frac{1}{2}\varepsilon\rho F^2(\phi_{;2}\phi_{,1}+\phi_{,1;2}-2\phi_{,2})=\frac{1}{2}\varepsilon\rho F^2(\phi_{;2}\phi_{,1}+\phi_{;2,1}-\phi_{,2})=\frac{1}{2}\varepsilon\rho F^2\phi_{;2,1}.
  \end{equation*}
  Consequently, the two sprays $S$ and $\overline{S}$ are projectively equivalent if and only if $\phi_{;2,1}=0$ in some coordinate system.
  \item[(ii)] Assuming that $F$ is projectively flat, by Remark \ref{projective surfaces}, we have
  \begin{equation}\label{projectively_flat_F}
      G^km_k=0.
  \end{equation}
  Differentiating \eqref{projectively_flat_F} with respect to $y^i$, where $F\dot{\partial _{j}}m_{i}=-(\ell_{i}-\varepsilon \mathcal{I} m_{i})m_{j}$, we get\\ $0= G^k_im_k+\frac{G^k}{F}(-\ell_{k}+\varepsilon \mathcal{I} m_{k})m_{i}$. Using \eqref{projectively_flat_F}, we obtain 
  \begin{equation}\label{diff_G^km_k}
     G^k_im_km^i=\frac{\varepsilon G^k}{F}\ell_{k}.
  \end{equation}
  Substituting \eqref{projectively_flat_F} and \eqref{diff_G^km_k} into \eqref{equivalence 1 of C bar}, then by Proposition \ref{proposition1 of bar$FC$-conformal} \textbf{(ii)}, the anisotropic conformal transformation \eqref{the anisotropic conformal transformation} is $\overline{C}$-anisotropic conformal   if and only if  $\phi_{,2}=\phi_{;2}\Big{[}\phi_{,1}-\frac{G^k}{F^2}\ell_k\Big{]}.$ On the other hand, plugging  \eqref{diff_G^km_k} into \eqref{equivalence 2 of C bar} leads to the anisotropic conformal transformation \eqref{the anisotropic conformal transformation} is $C$-anisotropic if and only if $\phi_{,2}=-\frac{\phi_{;2}}{F^2}G^k\ell_k. $
  \item[(iii)]  Let the property of $C$-anisotropic conformal be  preserved under the anisotropic conformal change \eqref{the anisotropic conformal transformation}, form Proposition \ref{proposition1 of bar$FC$-conformal} \textbf{(iii)} and \eqref{equivalence 3 of C bar}, we have 
  $\phi_{,1}=-\frac{2}{F^2}\phi_{;2}G^k\;m_k.$
  Since $\phi_{;2}\neq0$, then, by Remark \ref{projective surfaces}, the  metric $F$ is projectively flat ($G^km_k=0$) if and only if the anisotropic conformal factor is a first integral of the geodesic spray $S$ (i.e. $\phi_{,1}=0$). 
  \end{description}
  \vspace*{-0.75 cm}\[\qedhere\]
\end{proof}

\begin{remark} \label{remark of $C$-coformal bar $C$-conformal with isot. cofor. fact.}
\begin{description}
    \item[(i)] Let $\overline{F}=e^{\phi}F$ be non-Riemannian with $\phi$ is horizontally constant (i.e. $\delta_{i}\phi =0$), then the anisotropic conformal transformation \eqref{the anisotropic conformal transformation} is $\overline{C}$-anisotropic if and only if $2\phi_{;2}G^km_k=\varepsilon F G^i_km^km_{i},$ by Proposition \ref{proposition1 of bar$FC$-conformal} \textbf{(ii)} and \eqref{equivalence 1 of C bar}.
    \item[(ii)] When the conformal factor in \eqref{the anisotropic conformal transformation} is a function of positional argument, both $C$-anisotropic and $\overline{C}$-anisotropic conformal transformations reduce to the well-known $C$-conformal transformation.
\end{description} 
\end{remark}

\begin{definition}
The anisotropic conformal change \eqref{the anisotropic conformal transformation} is said to be horizontal $C$-anisotropic conformal if  $(\delta_{i}\phi)C^i_{jk}=0.$ Similarly, it is said to be  horizontal $\overline{C}$-anisotropic conformal if $(\delta_{i}\phi)\overline{C}^i_{jk}=0.$  
\end{definition}

\begin{theorem}\label{theorem-horizotal-$C$-anisotropic}
Let $(M,F)$ be a Finsler surface and \eqref{the anisotropic conformal transformation} be the anisotropic conformal transformation. Then we have the following:
\begin{description} 
\item[(i)]  the anisotropic conformal transformation \eqref{the anisotropic conformal transformation} is horizontal $C$-anisotropic conformal if and only if either $F$ is Riemannian or $\phi_{,2}=0.$
\item[(ii)]  the anisotropic conformal transformation \eqref{the anisotropic conformal transformation} is  horizontal $\overline{C}$-anisotropic conformal if and only if $\mathcal{I}=\frac{\varepsilon}{2}(\ln{\rho}-4\phi)_{;2}$ or $\phi_{,2}=\phi_{,1}\phi_{;2}.$ 
\item[(iii)]  The anisotropic conformal change \eqref{the anisotropic conformal transformation} between non-Riemannian surfaces is a horizontal $C$-anisotropic and $\overline{C}$-anisotropic if and only if the geodesic spray is invariant.
\end{description}
\end{theorem}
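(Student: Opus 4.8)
The plan is to reduce each of the three statements to a scalar identity obtained by contracting the relevant Cartan tensor against the horizontal gradient $\delta_i\phi$, and then to exploit the decomposition $\delta_i\phi=\phi_{,1}\ell_i+\phi_{,2}m_i$ from \eqref{vertical and horizontal deri of f} together with the orthonormality relations \eqref{vanishing li mi}. The two scalar contractions I will use throughout are $(\delta_i\phi)\ell^i=\phi_{,1}$ and $(\delta_i\phi)m^i=\varepsilon\phi_{,2}$, both immediate from \eqref{vanishing li mi}. For \textbf{(i)}, I would raise the first index in \eqref{cartan tensor Finsler surface} to write $C^i_{jk}=\tfrac{\mathcal{I}}{F}m^im_jm_k$ and contract with $\delta_i\phi$; using $(\delta_i\phi)m^i=\varepsilon\phi_{,2}$ the horizontal $C$-anisotropic condition $(\delta_i\phi)C^i_{jk}=0$ collapses to $\tfrac{\varepsilon\mathcal{I}\phi_{,2}}{F}\,m_jm_k=0$, which holds if and only if $\mathcal{I}\phi_{,2}=0$, i.e. if and only if $F$ is Riemannian ($\mathcal{I}=0$) or $\phi_{,2}=0$.

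For \textbf{(ii)}, I would start from the structure of the transformed Cartan tensor already isolated in the proof of Proposition \ref{proposition1 of bar$FC$-conformal}\,\textbf{(ii)}, namely that $\overline{C}^i_{jk}$ is a nonzero scalar multiple of $\overline{\mathcal{I}}\,(m^i-\varepsilon\phi_{;2}\ell^i)m_jm_k$ in the modified Berwald frame (the prefactor coming from \eqref{transform of Berwald frame} is $\tfrac{\varepsilon\sqrt{\varepsilon\rho}}{F\rho}\neq0$ and hence irrelevant to a vanishing condition). Contracting with $\delta_i\phi$ and using both $(\delta_i\phi)m^i=\varepsilon\phi_{,2}$ and $(\delta_i\phi)\ell^i=\phi_{,1}$ produces the scalar factor $\varepsilon(\phi_{,2}-\phi_{;2}\phi_{,1})$, so the horizontal $\overline{C}$-anisotropic condition holds if and only if $\overline{\mathcal{I}}=0$ or $\phi_{,2}=\phi_{,1}\phi_{;2}$. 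Finally I would translate $\overline{\mathcal{I}}=0$ into the stated condition $\mathcal{I}=\tfrac{\varepsilon}{2}(\ln\rho-4\phi)_{;2}$ through \eqref{transform of main scalar 2}, exactly as in Proposition \ref{proposition1 of bar$FC$-conformal}\,\textbf{(ii)}.

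For \textbf{(iii)}, the idea is to run \textbf{(i)} and \textbf{(ii)} simultaneously in the non-Riemannian regime. Since $F$ is non-Riemannian, \textbf{(i)} forces $\phi_{,2}=0$; since $\overline{F}$ is non-Riemannian ($\overline{\mathcal{I}}\neq0$), \textbf{(ii)} forces $\phi_{,2}=\phi_{,1}\phi_{;2}$. For a proper change $\phi_{;2}\neq0$, these combine to give $\phi_{,1}=\phi_{,2}=0$, i.e. $\delta_i\phi=0$. I would then feed $\phi_{,1}=\phi_{,2}=0$, together with the resulting $\phi_{,1;2}=0$, into \eqref{formula of Q only} and \eqref{formula of P only} to obtain $Q=P=0$, so \eqref{transform of coefficient spray} yields $\overline{G}^i=G^i$ and the spray is invariant. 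For the converse, spray invariance means $Q=P=0$; then \eqref{relation between P and Q} gives $F^2\phi_{,1}=0$, hence $\phi_{,1}=0$ and $\phi_{,1;2}=0$, and then $Q=0$ in \eqref{formula of Q only} forces $\phi_{,2}=0$; thus $\delta_i\phi=0$, which by \textbf{(i)} and \textbf{(ii)} makes the change both horizontal $C$- and $\overline{C}$-anisotropic.

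The only place demanding care—and the main obstacle—is the bookkeeping with the iterated scalar derivative $\phi_{,1;2}$: one must justify that $\phi_{,1}\equiv0$ annihilates its further scalar derivative $\phi_{,1;2}$, and keep the signs and $\varepsilon$-factors straight when passing between $Q$, $P$ and the horizontal/vertical scalar derivatives, invoking the commutation formula \eqref{second comutation} where needed. Once that accounting is fixed, every other step is a direct contraction in the modified Berwald frame.
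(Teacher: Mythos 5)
Your proposal is correct and follows essentially the same route as the paper: parts \textbf{(i)} and \textbf{(ii)} are proved by exactly the same contractions of $\delta_i\phi=\phi_{,1}\ell_i+\phi_{,2}m_i$ against $C^i_{jk}$ and $\overline{C}^i_{jk}$ in the modified Berwald frame. The only difference is in \textbf{(iii)}, where the paper simply cites \cite[Theorem 4.11]{first_paper} for the equivalence between spray invariance and $\delta_i\phi=0$, whereas you re-derive that equivalence directly from \eqref{formula of Q only}, \eqref{formula of P only}, \eqref{relation between P and Q} and \eqref{transform of coefficient spray} (correctly handling $\phi_{,1}\equiv 0\Rightarrow\phi_{,1;2}=0$), which makes your argument self-contained but not substantively different.
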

\begin{proof}
\begin{description}
    \item[(i)] The anisotropic conformal transformation \eqref{the anisotropic conformal transformation} is horizontal $C$-anisotropic conformal means that
\begin{equation*}
 0=(\delta_{i}\phi)C^i_{jk}=(\phi_{,1}\ell_i+\phi_{,2}m_i)\frac{\mathcal{I}}{F}m^im_jm_k=\frac{\varepsilon\phi_{,2}\mathcal{I}}{F}m_jm_k,
\end{equation*}
which is equivalent that either  $F$ being Riemannian or $\phi_{,2}=0.$
\item[(ii)] From \eqref{transform of Berwald frame} and \eqref{transform of main scalar 2}, the anisotropic conformal transformation \eqref{the anisotropic conformal transformation} is a horizontal $\overline{C}$-anisotropic conformal   if and only if
\begin{align}
0=&(\delta_{i}\phi)\overline{C}^i_{jk}=\frac{1}{F}[(\phi_{,1}\ell_i+\phi_{,2}m_i)(\mathcal{I}+2\varepsilon\phi_{;2}-\frac{\varepsilon\rho_{;2}}{2\rho})(m^im_jm_k-\varepsilon\phi_{;2}\ell^im_jm_k)]\nonumber\\
=&\frac{1}{F}[(\mathcal{I}+2\varepsilon\phi_{;2}-\frac{\varepsilon\rho_{;2}}{2\rho})(\varepsilon \phi_{,2}m_jm_k-\varepsilon\phi_{,1}\phi_{;2}m_jm_k)].\nonumber
\end{align}
Hence, the anisotropic conformal transformation \eqref{the anisotropic conformal transformation} is horizontal $\overline{C}$-anisotropic conformal if and only if $\mathcal{I}=\frac{\varepsilon}{2}[\frac{\rho_{;2}}{\rho}-4\phi_{;2}]=\frac{\varepsilon}{2}(\ln{\rho}-4\phi)_{;2}$  or $\phi_{,2}=\phi_{,1}\phi_{;2}.$
\item[(iii)] 
    ($\Longleftarrow$) If the geodesic spray is invariant, then $\phi$ is horizontally constant \cite[Theorem 4.11]{first_paper}.  Consequently, the anisotropic conformal transformation \eqref{the anisotropic conformal transformation}  is  horizontal $C$-anisotropic and $\overline{C}$-anisotropic.\\
    ($\Longrightarrow$) If the anisotropic conformal change \eqref{the anisotropic conformal transformation} is a horizontal $C$-anisotropic and $\overline{C}$-anisotropic. Also, $F$ and $\overline{F}$ are non-Riemannian. Then, we get 
    $\phi_{,2}=0, \text{ and } \phi_{,2}=\phi_{;2}\phi_{,1}.$ Therefore,  $\phi_{,1}=\phi_{,2}=0$ (as  $\phi_{;2}\neq0$) which implies that  the geodesic spray is invariant.
\end{description}
\vspace*{-0.9 cm}\[\qedhere\]
\end{proof}

\begin{proposition}
    Let $(M,F)$\emph{ (}resp. $(M,\overline{F})$\emph{) }be non-Riemannian surface and \eqref{the anisotropic conformal transformation} be a horizontal $C$-anisotropic \emph{(}resp. horizontal $\overline{C}$-anisotropic\emph{)}. The geodesic spray is invariant if and only if the conformal factor is first integral of $S$ \emph{ (}resp. either $\phi_{,2}=0$ or $\phi_{,1}=0\emph{) }$.
\end{proposition}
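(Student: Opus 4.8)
The plan is to reduce the whole statement to the characterisations already obtained in Theorem~\ref{theorem-horizotal-$C$-anisotropic} together with the spray-invariance criterion from \cite[Theorem 4.11]{first_paper}. Recall that the geodesic spray is invariant precisely when $\phi$ is horizontally constant, i.e. $\delta_i\phi=0$, which in the modified Berwald frame \eqref{vertical and horizontal deri of f} means $\phi_{,1}=\phi_{,2}=0$. I will also use the one-line identity $S(\phi)=y^i\delta_i\phi=\phi_{,1}(y^i\ell_i)+\phi_{,2}(y^im_i)=F\phi_{,1}$, which follows from $y^i\delta_i\phi=S(\phi)$ (since $y^iG^j_i=2G^j$ by homogeneity), $y^i\ell_i=F$ and $y^im_i=0$ via \eqref{vanishing li mi}. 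Since $F\neq0$ on $\mathcal{A}$, this shows that ``$\phi$ is a first integral of $S$'' is exactly the condition $\phi_{,1}=0$. With these two facts the argument is a short case analysis, and I assume throughout that the change is proper, $\phi_{;2}\neq0$.

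For the horizontal $C$-anisotropic statement I would start from Theorem~\ref{theorem-horizotal-$C$-anisotropic}\textbf{(i)}: as $(M,F)$ is non-Riemannian, horizontal $C$-anisotropy forces $\phi_{,2}=0$. Hence the condition $\phi_{,1}=\phi_{,2}=0$ for horizontal constancy collapses to the single requirement $\phi_{,1}=0$, and both implications become immediate. Indeed, if the spray is invariant then $\phi_{,1}=0$, so $S(\phi)=F\phi_{,1}=0$ and $\phi$ is a first integral; conversely, if $\phi$ is a first integral then $\phi_{,1}=0$, which combined with the already forced $\phi_{,2}=0$ yields $\delta_i\phi=0$ and thus spray invariance.

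For the horizontal $\overline{C}$-anisotropic statement I would invoke Theorem~\ref{theorem-horizotal-$C$-anisotropic}\textbf{(ii)}: since $(M,\overline{F})$ is non-Riemannian, the surviving alternative is $\phi_{,2}=\phi_{,1}\phi_{;2}$. The forward direction is trivial, since spray invariance gives $\phi_{,1}=\phi_{,2}=0$, so in particular ``$\phi_{,2}=0$ or $\phi_{,1}=0$'' holds. For the converse I exploit $\phi_{;2}\neq0$: if $\phi_{,2}=0$ then $\phi_{,1}\phi_{;2}=0$ forces $\phi_{,1}=0$, while if $\phi_{,1}=0$ then the relation directly gives $\phi_{,2}=0$; in either sub-case $\phi_{,1}=\phi_{,2}=0$, so the spray is invariant. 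The proof is essentially bookkeeping on top of Theorem~\ref{theorem-horizotal-$C$-anisotropic}, and the one step I would treat most carefully is precisely this use of properness $\phi_{;2}\neq0$ in the sub-case $\phi_{,2}=0$: without it the constraint $\phi_{,2}=\phi_{,1}\phi_{;2}$ would not recover $\phi_{,1}=0$ and the equivalence would break down.
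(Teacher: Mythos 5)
Your proof is correct and follows essentially the same route as the paper: both reduce the statement to Theorem~\ref{theorem-horizotal-$C$-anisotropic} together with the criterion that the spray is invariant iff $\phi_{,1}=\phi_{,2}=0$ (i.e.\ $\delta_i\phi=0$, via \cite[Theorem 4.11]{first_paper}). Your write-up is actually more complete than the paper's: you justify explicitly that ``first integral of $S$'' means $\phi_{,1}=0$ via $S(\phi)=F\phi_{,1}$, and you make explicit the use of properness $\phi_{;2}\neq0$ in the sub-case $\phi_{,2}=0$, a step the paper leaves implicit.
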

\begin{proof}
Let the conformal factor be first integral of $S$ and \eqref{the anisotropic conformal transformation} be  horizontal $C$-anisotropic conformal, provided that $F$ is non-Riemannian, then by
Proposition \ref{theorem-horizotal-$C$-anisotropic}, we get $ \phi_{,1}=0=\phi_{,2}$ which is equivalent to  $\phi$ is constant horizontally which means the geodesic spray is invariant under the anisotropic change \eqref{the anisotropic conformal transformation} by \cite[Theorem 4.11]{first_paper}. In the case of $(M,\overline{F})$ being a non-Riemannian surface and \eqref{the anisotropic conformal transformation} being horizontally $\overline{C}$-anisotropic, we have $\phi_{,2}=\phi_{;2}\phi_{,1}$. Then the geodesic spray is invariant if and only if either $\phi_{,2}=0$ or $\phi_{,1}=0. $
\end{proof}

\begin{proposition}
 Let $(M,\overline{F})$ be non-Riemannian surface and \eqref{the anisotropic conformal transformation} be a horizontal $\overline{C}$-anisotropic. The two geodesic sprays $S$ and $\overline{S}$ are projectively equivalent if and only if $\phi_{;2,1}=0$  \emph{(} i.e. $\phi_{;2}$ is first integral of the geodesic spray $S$\emph{).}
\end{proposition}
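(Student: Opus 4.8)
The plan is to read off projective equivalence directly from the spray decomposition \eqref{transform of coefficient spray}. Since $\overline{G}^{i}=G^{i}+Qm^{i}+P\ell^{i}$ and $\ell^{i}=y^{i}/F$ is proportional to $y^{i}$, the term $P\ell^{i}$ is a genuinely projective term, whereas $Qm^{i}$ is transverse to $y^{i}$. As $(\ell^{i},m^{i})$ is a frame, the two sprays $S$ and $\overline{S}$ are projectively equivalent precisely when the non-projective part vanishes, i.e. when $Q=0$. Thus the whole statement reduces to establishing that, under the standing hypotheses, $Q=0\iff\phi_{;2,1}=0$.

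First I would use the two hypotheses to pin down the relevant branch of the horizontal $\overline{C}$-anisotropic condition. Because $(M,\overline{F})$ is assumed non-Riemannian, the alternative $\mathcal{I}=\frac{\varepsilon}{2}(\ln\rho-4\phi)_{;2}$ in Theorem \ref{theorem-horizotal-$C$-anisotropic}\,\textbf{(ii)} (which characterizes $\overline{F}$ being Riemannian) is excluded, so the horizontal $\overline{C}$-anisotropic assumption forces
\[
\phi_{,2}=\phi_{,1}\phi_{;2}.
\]
I would then substitute this identity into the formula \eqref{formula of Q only} for $2Q$. Replacing $2\phi_{,2}$ by $2\phi_{,1}\phi_{;2}$ yields $2Q=\varepsilon\rho F^{2}\bigl(\phi_{,1;2}-\phi_{,1}\phi_{;2}\bigr)$, and applying the commutation formula \eqref{second comutation} in the form $\phi_{,1;2}=\phi_{;2,1}+\phi_{,2}$ together with $\phi_{,2}=\phi_{,1}\phi_{;2}$ once more makes the two $\phi_{,1}\phi_{;2}$ terms cancel, leaving
\[
2Q=\varepsilon\rho F^{2}\,\phi_{;2,1}.
\]
This is exactly the simplification already carried out in the locally Minkowski case treated earlier, so no new computational difficulty arises here. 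Since $\varepsilon=\pm1$, $\rho\neq0$ (the denominator in \eqref{formula of rho} being nonzero by the nondegeneracy clause of \eqref{the anisotropic conformal transformation}) and $F^{2}\neq0$, we conclude $Q=0\iff\phi_{;2,1}=0$, which combined with the first paragraph gives the stated equivalence.

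Finally, for the parenthetical reformulation I would record that for any $f$ one has $f_{,1}=\frac{1}{F}S(f)$: indeed $f_{,1}=\ell^{i}\delta_{i}f=\frac{1}{F}\bigl(y^{i}\partial_{i}f-y^{i}G^{j}_{i}\dot{\partial}_{j}f\bigr)$, and by the $h(2)$-homogeneity of the spray coefficients $y^{i}G^{j}_{i}=2G^{j}$, so that $y^{i}\partial_{i}f-2G^{j}\dot{\partial}_{j}f=S(f)$. Applying this with $f=\phi_{;2}$ gives $\phi_{;2,1}=\frac{1}{F}S(\phi_{;2})$, whence $\phi_{;2,1}=0$ is equivalent to $\phi_{;2}$ being a first integral of $S$. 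The only step that warrants care is the identification of projective equivalence with the vanishing of $Q$; everything else is a direct substitution and an invocation of the already-established characterization in Theorem \ref{theorem-horizotal-$C$-anisotropic}\,\textbf{(ii)}.
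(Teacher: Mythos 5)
Your proof is correct and follows essentially the same route as the paper: the paper likewise reduces projective equivalence to $Q=0$ (citing Theorem 5.2 of the first paper rather than arguing it from the frame decomposition as you do), invokes Theorem \ref{theorem-horizotal-$C$-anisotropic}\,\textbf{(ii)} with the non-Riemannian hypothesis to get $\phi_{,2}=\phi_{;2}\phi_{,1}$, and performs the identical substitution via the commutation formula \eqref{second comutation} to arrive at $2Q=\varepsilon\rho F^{2}\phi_{;2,1}$. Your extra verification of the first-integral reformulation via $f_{,1}=\frac{1}{F}S(f)$ is a harmless and correct addition that the paper leaves implicit.
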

\begin{proof}
The two  geodesic sprays $S$ and $\overline{S}$ are projectively equivalent if and only if $Q=0$, by \cite[Theorem 5.2]{first_paper}. Then from \eqref{formula of Q only} and \eqref {second comutation}, we have
    \begin{equation}\label{C bar horz and projec equi}
        0=\phi_{;2}\phi_{,1}+\phi_{,1;2}-2\phi_{,2}=\phi_{;2}\phi_{,1}+\phi_{;2,1}-\phi_{,2}
    \end{equation}
    Since \eqref{the anisotropic conformal transformation} is a horizontal $\overline{C}$-anisotropic conformal change, provided that $(M,\overline{F})$ is a non Riemannian metric, then from Proposition \ref{theorem-horizotal-$C$-anisotropic} \textbf{(ii)} and \eqref{C bar horz and projec equi}, the two geodesic sprays $S$ and $\overline{S}$ are projectively equivalent if and only if $\phi_{;2,1}=0$.
\end{proof}
\begin{remark}
    Let \eqref{the anisotropic conformal transformation} be the anisotropic conformal transformation between non-Riemannian metrics. We have the following:
    \begin{description}
        \item[(i)]From the commutation formula \eqref{second comutation}, the conditions that $\phi$ and $\phi_{;2}$ are first integral of the geodesic spray $S$ is equivalent to that $S$ and $\overline{S}$ are invariant under the anisotropic conformal change \eqref{the anisotropic conformal transformation}. 
        \item[(ii)] The anisotropic conformal transformation \eqref{the anisotropic conformal transformation} is  $C$-anisotropic and horizontal $C$-anisotropic conformal  if and only if $G^i_{k}m^k m_i=0,$ which follows from Proposition \ref{proposition1 of bar$FC$-conformal} \textbf{(i)}, \eqref{equivalence 2 of C bar} and Theorem \ref{theorem-horizotal-$C$-anisotropic}.  
        \item[(iii)] The anisotropic conformal transformation \eqref{the anisotropic conformal transformation} is $\overline{C}$-anisotropic and horizontal $\overline{C}$-anisotropic conformal if and only if $2\phi_{;2} G^km_k=\varepsilon F G^i_km^km_{i}.$ 
    \end{description}
\end{remark}

We have discussed the $C$-anisotropic conformal transformation ($(\partial_{i}\phi)C^i_{jk}=0$) and horizontal $C$-anisotropic conformal transformation ($(\delta_{i}\phi)C^i_{jk}=0$). These two types of anisotropic conformal transformation reduce to $C$-conformal transformation when the conformal factor $\phi$ is a function of $x$ only.  If we take the condition as $(\dot{\partial}_i\phi)C^i_{jk}=0$ (what we call \lq \lq vertical $C$ anisotropic conformal"), then it does not reduce to the $C$-conformal transformation.
\begin{definition}
The proper anisotropic conformal change \eqref{the anisotropic conformal transformation} is said to be  vertical $C$-anisotropic conformal change if  $(\dot{\partial}_i\phi)C^i_{jk}=0$.  Similarly,  \eqref{the anisotropic conformal transformation}  is said to be vertical $\overline{C}$-anisotropic conformal change if $(\dot{\partial}_i\phi)\overline{C}^i_{jk}=0.$
\end{definition}
\begin{proposition}\label{prposition vertical $C$-anisotropic}
Let $(M,F)$ be a conic pseudo-Finsler surface. The anisotropic conformal change  \eqref{the anisotropic conformal transformation} is  vertical $C$-anisotropic  if and only if $F$ is Riemannian. Moreover, the anisotropic conformal change  \eqref{the anisotropic conformal transformation} is  vertical $\overline{C}$-anisotropic  if and only if $\overline{F}$ is Riemannian.   
\end{proposition}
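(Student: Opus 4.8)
The plan is to reduce both equivalences to a single contraction identity: contracting the vertical derivative of the $h(0)$-factor $\phi$ against the frame vectors isolates $\phi_{;2}$. First I would record two elementary facts obtained by contracting the defining relation $F\dot{\partial}_i\phi=\phi_{;1}\ell_i+\phi_{;2}m_i$ of \eqref{vertical and horizontal deri of f} with $m^i$ and with $\ell^i$ and invoking \eqref{vanishing li mi}: since $\ell_im^i=0$, $m_im^i=\varepsilon$ and $\ell_i\ell^i=1$, one gets $(\dot{\partial}_i\phi)m^i=\varepsilon\phi_{;2}/F$ and $(\dot{\partial}_i\phi)\ell^i=\phi_{;1}/F$. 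Because $\phi$ is $h(0)$, the homogeneity rule $f_{;1}=rf$ forces $\phi_{;1}=0$, so in fact $(\dot{\partial}_i\phi)\ell^i=0$. These are the only two ingredients the proof really needs.

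For the first assertion I would start from the surface Cartan tensor \eqref{cartan tensor Finsler surface}, which gives $C^i_{jk}=\frac{\mathcal{I}}{F}m^im_jm_k$. Contracting with $\dot{\partial}_i\phi$ and substituting $(\dot{\partial}_i\phi)m^i=\varepsilon\phi_{;2}/F$ yields
\[
(\dot{\partial}_i\phi)C^i_{jk}=\frac{\varepsilon\,\mathcal{I}\,\phi_{;2}}{F^{2}}\,m_jm_k.
\]
Since the change is \emph{proper} we have $\phi_{;2}\neq0$, so the vertical $C$-anisotropic condition $(\dot{\partial}_i\phi)C^i_{jk}=0$ is equivalent to $\mathcal{I}=0$, i.e.\ to $F$ being Riemannian; conversely $\mathcal{I}=0$ makes $C^i_{jk}$ vanish and the condition holds trivially.

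For the second assertion I would reuse the expression for the transformed Cartan tensor already appearing in the proofs of Proposition \ref{proposition1 of bar$FC$-conformal} and Theorem \ref{theorem-horizotal-$C$-anisotropic}, namely $\overline{C}^i_{jk}=\Lambda\,(m^i-\varepsilon\phi_{;2}\ell^i)m_jm_k$ with scalar $\Lambda=\frac{1}{F}\big(\mathcal{I}+2\varepsilon\phi_{;2}-\frac{\varepsilon\rho_{;2}}{2\rho}\big)$, which by \eqref{transform of main scalar 2} is a nonzero multiple of $\overline{\mathcal{I}}$. Contracting with $\dot{\partial}_i\phi$ and feeding in both facts from the first paragraph — crucially $(\dot{\partial}_i\phi)\ell^i=0$, which annihilates the $\ell^i$-term — leaves $(\dot{\partial}_i\phi)\overline{C}^i_{jk}=\Lambda\,\frac{\varepsilon\phi_{;2}}{F}\,m_jm_k$. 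Again $\phi_{;2}\neq0$, so this vanishes if and only if $\Lambda=0$, i.e.\ if and only if $\overline{\mathcal{I}}=0$, which is exactly $\overline{F}$ being Riemannian; the converse is immediate since $\overline{\mathcal{I}}=0$ forces $\overline{C}^i_{jk}=0$.

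The computation is essentially routine once the two contraction identities are in hand; the only place demanding care — and what I would treat as the main obstacle — is justifying the closed form of $\overline{C}^i_{jk}$ in the modified Berwald frame together with the claim that its scalar coefficient vanishes precisely when $\overline{\mathcal{I}}=0$. Rather than re-derive it from \eqref{transform of Berwald frame}, I would cite the identical expansion used earlier in the section, where the $(m^i-\varepsilon\phi_{;2}\ell^i)$ structure and the proportionality of $\Lambda$ to $\overline{\mathcal{I}}$ are already established.
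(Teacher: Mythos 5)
Your proof is correct and takes essentially the same route as the paper's: contract $C^i_{jk}$ and $\overline{C}^i_{jk}$ with $\dot{\partial}_i\phi$, use the frame relations \eqref{vanishing li mi} together with $\phi_{;1}=0$ to reduce each condition to a nonzero multiple of $\phi_{;2}\,\mathcal{I}\,m_jm_k$ (resp.\ $\phi_{;2}\,\overline{\mathcal{I}}\,m_jm_k$), and invoke properness ($\phi_{;2}\neq 0$). The only cosmetic difference is that you write out the scalar prefactor of $\overline{C}^i_{jk}$ explicitly and note it is a nonzero multiple of $\overline{\mathcal{I}}$, a step the paper leaves implicit.
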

\begin{proof}
Since, we have, by  \eqref{vanishing li mi},  \eqref{cartan tensor Finsler surface} and \eqref{vertical and horizontal deri of f},
\begin{equation*}
(\dot{\partial}_{i}\phi) C^i_{jk}=\frac{\mathcal{I}}{\overline{F}}(\dot{\partial}_{i}\phi) m^im_jm_k=\frac{\mathcal{I}}{F^2}\varepsilon\phi_{;2}m_{j}m_{k}.
\end{equation*}
 Consequently, the anisotropic conformal transformation \eqref{the anisotropic conformal transformation} is  vertical $C$-anisotropic  if and only if $F$ is Riemannian (as $\phi_{;2}\neq0$).
 Secondly, the anisotropic conformal change  \eqref{the anisotropic conformal transformation} is  vertical $\overline{C}$-anisotropic  if and only if 
 \begin{equation*}
   0=(\dot{\partial}_{i}\phi)\overline{C}^i_{jk}=\phi_{;2}m_i\mathcal{\overline{I}}(m^im_jm_k-\varepsilon\phi_{;2}\ell^im_jm_k)=\varepsilon\phi_{;2}\mathcal{\overline{I}}m_jm_k.  
 \end{equation*}
As $\phi_{;2}\neq0$, the anisotropic conformal transformation \eqref{the anisotropic conformal transformation} is vertical $\overline{C}$-anisotropic if and only if $\overline{F}$ is Riemannian.
\end{proof}
\bigskip
Vector fields in differential geometry, especially concurrent vector fields, are essential for understanding manifold geometry.  Concurrent vector fields play a key role in characterizing geometric properties in Riemannian and Finsler spaces. In 1950, Tachibana established the basis for investigating concurrent vector fields in Finsler spaces  \cite{Tachibana}. Then, Hashiguchi studied the effect of $C$-conformal change on Finsler metrics in 1976, linking them to concurrent vector fields  \cite{Hashiguchi76} . The notion of a semi-concurrent vector field was introduced by N. Youssef and colleagues in 2019 \cite{Youssef semiconcurrent}.   


\begin{definition}\em{\cite{Youssef semiconcurrent}}
A vector field $X^i(x)$ on M is said to be semi-concurrent if it satisfies
\begin{align}
X^iC_{ijk}=0.
\end{align}
\end{definition}

\begin{remark}\em{\cite[Example 3.1.1.1]{Matsumoto 2003} \label{remark of vector field of x only}}
\em{Let $(M,F)$ be a conic pseudo-Finsler surface. The vector field $X^i(x)=A(x,y)\ell^i+B(x,y)m^i$ in $M$ is a function of position only if and only if $A_{;2}=B$ and $B_{;2}=-\varepsilon(A-\mathcal{I}B)$. \\
Similarly, a covariant vector field $w_i(x)=C(x,y)\ell_i+D(x,y) m_i$ is a function of $x$ only if and only if  $C_{;2}=D$ and $D_{;2}=-\varepsilon(C-\mathcal{I}D)$.\\
Also, we have if one of \lq \lq $A$ and $B$" or \lq \lq $C$ and $D$" vanishes then the other vanishes also.}  
\end{remark}

We introduce an alternative proof to the known result that a two-dimensional Finsler manifold admitting a semi-concurrent vector field is Riemannian \cite[Theorem 3.9 (a)]{Youssef semiconcurrent}.
\begin{proposition}\label{F admit semi-concurrent}
 A conic pseudo-Finsler surface  $(M,F)$  admits semi-concurrent vector field if and only if $F$is Riemannian.    
\end{proposition}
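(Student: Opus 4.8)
The plan is to decompose the candidate vector field in the modified Berwald frame and substitute it into the Cartan tensor formula \eqref{cartan tensor Finsler surface}. Writing $X^i = A\ell^i + B m^i$ for smooth functions $A, B$, I would first compute the contraction $X^i C_{ijk}$ using $F C_{ijk} = \mathcal{I}\, m_i m_j m_k$ together with the orthonormality relations \eqref{vanishing li mi} (namely $\ell^i m_i = 0$ and $m^i m_i = \varepsilon$). These kill all but one term, leaving
$$X^i C_{ijk} = \frac{\varepsilon B \mathcal{I}}{F}\, m_j m_k.$$
Hence the semi-concurrence condition $X^i C_{ijk} = 0$ is equivalent to $B\mathcal{I} = 0$, so either $\mathcal{I} = 0$ (that is, $F$ is Riemannian) or $B = 0$.

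For the forward direction, suppose $(M,F)$ admits a nontrivial semi-concurrent vector field. If we had $\mathcal{I} \neq 0$, then necessarily $B = 0$. The crucial step is now to invoke Remark \ref{remark of vector field of x only}: since $X^i$ is a function of position only, the vanishing of the component $B$ forces $A$ to vanish as well (the remark asserts that if one of $A, B$ vanishes then so does the other). This would yield $X^i = 0$, contradicting nontriviality. Therefore $\mathcal{I} = 0$ and $F$ is Riemannian.

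The converse is immediate: if $F$ is Riemannian then $\mathcal{I} = 0$, so $C_{ijk} = 0$ by \eqref{cartan tensor Finsler surface}, and consequently every position-dependent vector field $X^i(x)$ trivially satisfies $X^i C_{ijk} = 0$, i.e. is semi-concurrent.

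The step I expect to carry the real content is the appeal to Remark \ref{remark of vector field of x only} to rule out the case $B = 0$. The contraction computation itself is routine, but without the position-only constraint on $X^i$ one could simply take any section with $B = 0$ and $A \neq 0$, which would make the statement false. It is precisely the differential relations $A_{;2} = B$ and $B_{;2} = -\varepsilon(A - \mathcal{I}B)$ characterizing position-only vector fields that force $B = 0$ to propagate to $A = 0$, thereby collapsing the vector field to zero and isolating $\mathcal{I} = 0$ as the only admissible alternative.
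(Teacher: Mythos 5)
Your proposal is correct and follows essentially the same route as the paper: decompose $X^i=A\ell^i+Bm^i$ in the modified Berwald frame, contract with the Cartan tensor to reduce semi-concurrence to $B\mathcal{I}=0$, and invoke Remark \ref{remark of vector field of x only} to show that $B=0$ would force $X^i=0$, a contradiction. Your explicit treatment of the converse is a small addition the paper leaves implicit, but the argument is the same.
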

\begin{proof}
    Assume that $(M,F)$ admits semi-concurrent vector field $X^i(x)=A(x,y)\ell^i+B(x,y)m^i$, then we have
    \begin{equation*}
        0=X^i(x)C_{ijk}=\frac{\mathcal{I}}{F}(A(x,y)\ell^i+B(x,y)m^i)m_im_jm_k=\frac{\varepsilon \mathcal{I}}{F}B(x,y)m_jm_k .
    \end{equation*}
   Which is equivalent to either $F$ is Riemannian or $B(x,y)=0$. From Remark \ref{remark of vector field of x only}, the later shows that $X^i$ is zero vector, which is contradiction. Hence the result.
\end{proof}
   
Now, we will determine the necessary and sufficient condition for the property of admitting a semi-concurrent vector field to be preserved under the anisotropic conformal change  \eqref{the anisotropic conformal transformation}.  
\begin{proposition}\label{theorem of F bar admits concurrent vector}
    Let \eqref{the anisotropic conformal transformation} be the anisotropic conformal transformation of a Finsler space $(M,F)$ which  admits a semi-concurrent vector field. A necessary and sufficient condition for  $(M,\overline{F})$ to admit a semi-concurrent vector field is that the function $(4\phi-\ln\rho)$ is  isotropic.
\end{proposition}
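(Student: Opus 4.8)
The plan is to reduce the two-sided existence of semi-concurrent vector fields to the Riemannian character of the two metrics, and then to read off that character from the transformation law \eqref{transform of main scalar 2} for the main scalar. First I would invoke Proposition \ref{F admit semi-concurrent}: since $(M,F)$ admits a semi-concurrent vector field, $F$ must be Riemannian, i.e. $\mathcal{I}=0$. The very same proposition, applied verbatim to the conic pseudo-Finsler surface $(M,\overline{F})$ (whose semi-concurrent condition reads $Y^i\overline{C}_{ijk}=0$), shows that $(M,\overline{F})$ admits a semi-concurrent vector field if and only if $\overline{F}$ is Riemannian, i.e. $\overline{\mathcal{I}}=0$. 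Thus the whole statement is equivalent to characterizing $\overline{\mathcal{I}}=0$ by the isotropy of $4\phi-\ln\rho$.

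Next I would substitute $\mathcal{I}=0$ into \eqref{transform of main scalar 2}, which collapses to
\begin{equation*}
\overline{\mathcal{I}}=\sqrt{\varepsilon\rho}\,\Big[2\varepsilon\phi_{;2}-\frac{\varepsilon\rho_{;2}}{2\rho}\Big]=\frac{\varepsilon}{2}\sqrt{\varepsilon\rho}\,\big(4\phi-\ln\rho\big)_{;2},
\end{equation*}
where I have used the linearity of the $;2$-derivative together with $(\ln\rho)_{;2}=\rho_{;2}/\rho$. Since $\rho$ is nowhere zero by \eqref{formula of rho} and the nondegeneracy built into \eqref{the anisotropic conformal transformation}, the prefactor $\sqrt{\varepsilon\rho}$ never vanishes, so $\overline{\mathcal{I}}=0$ if and only if $(4\phi-\ln\rho)_{;2}=0$.

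Finally I would upgrade the scalar condition $(4\phi-\ln\rho)_{;2}=0$ to genuine isotropy. Both $\phi$ and $\phi_{;2}$ are $h(0)$, hence so are $\sigma$, $\rho$ and $\ln\rho$; therefore $g:=4\phi-\ln\rho$ is $h(0)$, and its homogeneity forces $g_{;1}=0$ automatically (as $f_{;1}=rf$ for $h(r)$ functions). By the definition \eqref{vertical and horizontal deri of f}, $F\dot{\partial}_i g=g_{;1}\ell_i+g_{;2}m_i=g_{;2}m_i$, so $g_{;2}=0$ is equivalent to $\dot{\partial}_i g=0$, that is, to $g$ depending on the positional argument $x$ alone — precisely the isotropy of $4\phi-\ln\rho$. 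The only genuine subtlety is this last homogeneity argument: one must observe that $g_{;1}$ vanishes not by assumption but because $g$ is $h(0)$, which is exactly what allows the single scalar equation $g_{;2}=0$ to be promoted to full $y$-independence. Everything else is the direct application of Proposition \ref{F admit semi-concurrent} to both metrics and the algebraic factoring displayed above.
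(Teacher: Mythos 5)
Your proposal is correct and follows essentially the same route as the paper: reduce both sides to the Riemannian character of $F$ and $\overline{F}$ via Proposition \ref{F admit semi-concurrent}, set $\mathcal{I}=0$ in \eqref{transform of main scalar 2}, and factor the result as $\frac{\varepsilon}{2}\sqrt{\varepsilon\rho}\,(4\phi-\ln\rho)_{;2}$. Your closing homogeneity argument (that $g_{;1}=0$ for the $h(0)$ function $g=4\phi-\ln\rho$, so $g_{;2}=0$ already forces $\dot{\partial}_i g=0$) is a welcome explicit justification of a step the paper leaves implicit.
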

\begin{proof}
Let $(M,F)$ admits a semi-concurrent vector field,  by Proposition \ref{F admit semi-concurrent},  we get $\mathcal{I} =0$. Also,    $(M,\overline{F})$ admits a semi-concurrent vector field if and only if  $\overline{F}$ is Riemannian, or equivalently $\overline{\mathcal{I}}=0$, that is
\begin{align}\label{concurrent vector relation 1}
\mathcal{I}+2\varepsilon\phi_{;2}-\frac{\varepsilon\rho_{;2}}{2\rho}=0 .
\end{align}
Therefore, \eqref{concurrent vector relation 1} becomes  $2\varepsilon\phi_{;2}-\frac{\varepsilon\rho_{;2}}{2\rho}=0$,  which is $(4\phi-\ln\rho)_{;2}=0$. Then,  $(M,\overline{F})$ admits a semi-concurrent vector field is equivalent to   $(4\phi-\ln\rho)$ is an isotropic function.
\end{proof}

\begin{remark}\label{remark of necc and suff of F bar admits semicouncurrent}
\begin{description}
    \item[(i)] From \eqref{formula of rho} and Proposition \ref{theorem of F bar admits concurrent vector} a necessary and sufficient condition for $(M,\overline{F})$ to  admit a semi-concurrent vector field is that 
$\phi_{;2;2;2}+6\phi_{;2}\phi_{;2;2}+4(\phi_{;2})^3+4\varepsilon\phi_{;2}=0,$ provided that, $(M,F)$ admits a semi-concurrent vector field.
    \item[(ii)] If $\phi$ is a function of position only, the property of admitting a semi-concurrent vector field is conserved. In other words, the property of admitting a semi-concurrent vector field is preserved if the anisotropic conformal change is reduced to an isotropic conformal change.
\end{description} 
\end{remark}

\begin{corollary}
Let $(M,F)$ be a  conic pseudo-Finsler surface and \eqref{the anisotropic conformal transformation} be a proper anisotropic conformal transformation. If  $ \overline{\mathfrak{g}}=e^{4\phi}\mathfrak{g}$, then the Finsler metric $\overline{F}$ admits a semi-concurrent vector if and only if $\phi_{;2;2}=(\phi_{;2})^2.$ 
\end{corollary}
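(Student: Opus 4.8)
The plan is to start from the characterization of when $\overline{F}$ admits a semi-concurrent vector field that was just established in Remark \ref{remark of necc and suff of F bar admits semicouncurrent} \textbf{(i)}, namely the differential condition
\begin{equation*}
\phi_{;2;2;2}+6\phi_{;2}\phi_{;2;2}+4(\phi_{;2})^3+4\varepsilon\phi_{;2}=0,
\end{equation*}
and then use the extra hypothesis $\overline{\mathfrak{g}}=e^{4\phi}\mathfrak{g}$ to simplify this third-order condition down to the claimed second-order relation $\phi_{;2;2}=(\phi_{;2})^2$. First I would translate the determinant hypothesis into a statement about $\rho$: since the transformation of the metric tensor governs how $\mathfrak{g}$ scales, and from \eqref{transform of Berwald frame} the frame components pick up factors involving $\rho$, the condition $\overline{\mathfrak{g}}=e^{4\phi}\mathfrak{g}$ should force $\varepsilon\rho$ (or an equivalent combination) to equal $1$, i.e. $\rho=\varepsilon$. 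This is the key reduction, because via \eqref{formula of rho} and \eqref{formula of sigma} it pins down $\sigma$ and hence links $\phi_{;2;2}$ directly to $(\phi_{;2})^2$.

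**Carrying out the reduction.**

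Concretely, I would compute $\overline{\mathfrak{g}}=\det(\overline{g}_{ij})$ using $\overline{g}_{ij}=\overline{\ell}_i\overline{\ell}_j+\varepsilon\overline{m}_i\overline{m}_j$ together with the frame transformation \eqref{transform of Berwald frame}. The determinant of the Gram matrix in an orthonormal-type frame is controlled by the frame vectors, and the factors $e^{\phi}$ and $e^{\phi}\sqrt{\varepsilon/\rho}$ attached to $\overline{\ell}_i$ and $\overline{m}_i$ yield an overall scaling of the form $e^{4\phi}\cdot(\text{power of }\varepsilon\rho)$ times $\mathfrak{g}$. Matching this against the hypothesis $\overline{\mathfrak{g}}=e^{4\phi}\mathfrak{g}$ gives an algebraic equation in $\rho$; I expect it to reduce to $\varepsilon\rho=1$, equivalently $\rho=\varepsilon$ (using $\varepsilon^2=1$). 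Substituting $\rho=\varepsilon$ into \eqref{formula of rho} gives $\sigma+\varepsilon-(\phi_{;2})^2=\varepsilon$, hence $\sigma=(\phi_{;2})^2$. Then plugging into the defining formula \eqref{formula of sigma}, $\sigma=\phi_{;2;2}+\varepsilon\mathcal{I}\phi_{;2}+2(\phi_{;2})^2$, and equating with $\sigma=(\phi_{;2})^2$ yields $\phi_{;2;2}+\varepsilon\mathcal{I}\phi_{;2}+(\phi_{;2})^2=0$.

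**The final matching.**

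At this point I would feed the relation $\rho=\varepsilon$ back into the semi-concurrent characterization. Since $\overline{F}$ admits a semi-concurrent vector field if and only if $\overline{F}$ is Riemannian (Proposition \ref{prposition vertical $C$-anisotropic} and Proposition \ref{F admit semi-concurrent}), I would instead work directly from $\overline{\mathcal{I}}=0$. By \eqref{transform of main scalar 2} with $\rho=\varepsilon$ (so $\sqrt{\varepsilon\rho}=1$ and $\rho_{;2}$ is controlled), the condition $\overline{\mathcal{I}}=0$ reads $\mathcal{I}+2\varepsilon\phi_{;2}-\tfrac{\varepsilon\rho_{;2}}{2\rho}=0$; with $\rho$ constant in the $\phi_{;2}$-direction the term $\rho_{;2}$ vanishes, giving $\mathcal{I}=-2\varepsilon\phi_{;2}$. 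Substituting this into the relation $\phi_{;2;2}+\varepsilon\mathcal{I}\phi_{;2}+(\phi_{;2})^2=0$ obtained above produces $\phi_{;2;2}-2(\phi_{;2})^2+(\phi_{;2})^2=0$, i.e. $\phi_{;2;2}=(\phi_{;2})^2$, which is exactly the asserted equivalence. The main obstacle I anticipate is the determinant computation: one must carefully track how the non-orthogonal cross term $\varepsilon\phi_{;2}\ell^i$ appearing in $\overline{m}^i$ contributes, and confirm that the determinant really does collapse to the clean factor $e^{4\phi}(\varepsilon\rho)^{\pm 1}$ rather than something with leftover $\phi_{;2}$-dependence; getting this bookkeeping right is what makes the hypothesis $\overline{\mathfrak{g}}=e^{4\phi}\mathfrak{g}$ equivalent to $\rho=\varepsilon$ and thereby unlocks the whole argument.
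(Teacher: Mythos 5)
Your proposal is correct and follows essentially the same route as the paper: translate $\overline{\mathfrak{g}}=e^{4\phi}\mathfrak{g}$ into $\sigma=(\phi_{;2})^2$, hence $\rho=\varepsilon$ and $\rho_{;2}=0$, then impose $\overline{\mathcal{I}}=0$ via \eqref{transform of main scalar 2} and combine with $\phi_{;2;2}+\varepsilon\mathcal{I}\phi_{;2}+(\phi_{;2})^2=0$ to get $\phi_{;2;2}=(\phi_{;2})^2$. The only cosmetic differences are that you derive the equivalence with $\rho=\varepsilon$ from an explicit determinant computation (the paper simply asserts $\overline{\mathfrak{g}}=e^{4\phi}\mathfrak{g}\Leftrightarrow\sigma=(\phi_{;2})^2$), and your opening appeal to the third-order condition of Remark \ref{remark of necc and suff of F bar admits semicouncurrent} (which presupposes $\mathcal{I}=0$) is ultimately abandoned in favor of working directly from $\overline{\mathcal{I}}=0$, which is the correct move.
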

\begin{proof}
Since $ \overline{\mathfrak{g}}=e^{4\phi}\mathfrak{g}$ is equivalent to $\sigma=(\phi_{;2})^2$, then from \eqref{formula of sigma} we get
\begin{align}\label{concurrent vector relation 3}
\mathcal{I}+\varepsilon\phi_{;2}=\frac{-\varepsilon\phi_{;2;2}}{\phi_{;2}}.
\end{align}
Also, by using \eqref{formula of rho} we have $\rho=\varepsilon,\quad\rho_{;2}=0.$
Moreover,  Remark \ref{remark of necc and suff of F bar admits semicouncurrent} and\eqref{concurrent vector relation 3} gives rise to the Finsler metric $\overline{F}$ admits a semi-concurrent vector if and only if 
\begin{align*}
0=\mathcal{I}+2\varepsilon\phi_{;2}-\frac{\varepsilon\rho_{;2}}{2\rho}=\frac{-\varepsilon\phi_{;2;2}}{\phi_{;2}}+\varepsilon\phi_{;2}=\frac{\varepsilon(-\phi_{;2;2}+(\phi_{;2})^2)}{\phi_{;2}}.
\end{align*}
Hence, $\overline{F}$ admits a semi-concurrent vector if and only if $\phi_{;2;2}=(\phi_{;2})^2.$ 
\end{proof}

\section{Anisotropic $\phi T$-condition}\label{section_phi-$T$-condition}
The $T$-tensor \lq \lq $FT_{ijkh}=\mathcal{I}_{;2} \,m_im_jm_km_h$"  is one of the fundamental objects in Finsler geometry.  The Finsler metric satisfies the $T$-condition if its $T$-tensor vanishes identically. The $T$-condition and $\boldsymbol{\sigma} T$-condition are equivalent in the case of Finsler surfaces \cite[Theorem 3.5]{elgendi}. 
\begin{definition}
   A Finsler space $ (M,F)$ satisfies the $\boldsymbol{\sigma} T$-condition if M admits a non-constant smooth function  $\boldsymbol{\sigma}$ such that $(\partial_{i}\boldsymbol{\sigma})T^i_{jkr}=0$.
\end{definition}

We present an alternative proof of \cite[Theorem 3.5]{elgendi}

\begin{proposition}\label{sigma-t-condition}
 A two-dimensional Finsler metric satisfies  $\boldsymbol{\sigma} T$-condition if and only if has vanishing $T$-tensor.
\end{proposition}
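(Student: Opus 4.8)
The plan is to reduce the $\boldsymbol{\sigma}T$-condition to a single scalar equation by contracting the $T$-tensor against $\partial_i\boldsymbol{\sigma}$ in the modified Berwald frame, and then to invoke the constancy Lemma \ref{lemma of condition to homthety}, exactly mirroring the argument used for the $C$-conformal lemma earlier in the paper. First I would raise an index in the defining relation $FT_{ijkh}=\mathcal{I}_{;2}\,m_im_jm_km_h$ using $g^{is}$ together with $m^i=g^{is}m_s$, obtaining
\[
FT^i_{jkr}=\mathcal{I}_{;2}\,m^im_jm_km_r .
\]
Contracting with $\partial_i\boldsymbol{\sigma}$ and using \eqref{vanishing li mi}, \eqref{cartan tensor Finsler surface} and \eqref{vertical and horizontal deri of f} then yields
\[
(\partial_i\boldsymbol{\sigma})T^i_{jkr}=\frac{\mathcal{I}_{;2}}{F}\,(m^i\partial_i\boldsymbol{\sigma})\,m_jm_km_r .
\]
Since the tensor $m_jm_km_r$ does not vanish identically (the covector $m_i$ belongs to the orthonormal frame with $m^im_i=\varepsilon$), the $\boldsymbol{\sigma}T$-condition is equivalent to the scalar identity $\mathcal{I}_{;2}\,(m^i\partial_i\boldsymbol{\sigma})=0$, i.e. either $\mathcal{I}_{;2}=0$ or $m^i\partial_i\boldsymbol{\sigma}=0$.

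For the forward implication ($\Longrightarrow$) I would argue as follows. By hypothesis $\boldsymbol{\sigma}$ is a non-constant smooth function on $M$, hence it depends on the positional argument only. If the branch $m^i\partial_i\boldsymbol{\sigma}=0$ held, then Lemma \ref{lemma of condition to homthety} would force $\boldsymbol{\sigma}$ to be constant, contradicting the definition of the $\boldsymbol{\sigma}T$-condition. Therefore $\mathcal{I}_{;2}=0$, which by the defining relation of the $T$-tensor means $FT_{ijkh}=\mathcal{I}_{;2}\,m_im_jm_km_h=0$; that is, $F$ has vanishing $T$-tensor. The converse ($\Longleftarrow$) is immediate: if the $T$-tensor vanishes then $\mathcal{I}_{;2}=0$, so the displayed scalar identity holds for every function, and choosing any non-constant $\boldsymbol{\sigma}\in C^\infty(M)$ (which always exists) exhibits a function realizing the $\boldsymbol{\sigma}T$-condition.

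The only genuinely delicate point is the forward direction, where the dichotomy ``$\mathcal{I}_{;2}=0$ or $m^i\partial_i\boldsymbol{\sigma}=0$'' must be resolved. This is precisely where the hypothesis that $\boldsymbol{\sigma}$ is a function on $M$ (position only) and is non-constant becomes essential, since it is exactly the setting in which Lemma \ref{lemma of condition to homthety} applies and rules out the unwanted branch $m^i\partial_i\boldsymbol{\sigma}=0$. Everything else is a routine frame computation, so I expect no further obstacle.
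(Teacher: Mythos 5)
Your proposal is correct and follows essentially the same route as the paper: contract $T^i_{jkr}$ with $\partial_i\boldsymbol{\sigma}$ in the modified Berwald frame to reduce the condition to $\mathcal{I}_{;2}\,(m^i\partial_i\boldsymbol{\sigma})=0$, then use Lemma \ref{lemma of condition to homthety} (via $m^i\partial_i\boldsymbol{\sigma}=\varepsilon\boldsymbol{\sigma}_{,2}$) to rule out the branch that would force $\boldsymbol{\sigma}$ to be constant. Your explicit treatment of the converse is a minor addition the paper leaves implicit, but the argument is the same.
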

\begin{proof}
   Let $(M,F)$ satisfy  $\boldsymbol{\sigma} T$-condition, that is,  $( \partial_{i} \boldsymbol{\sigma})T^i_{jkr}=0$. Then,  by  \eqref{vertical and horizontal deri of f}, \eqref{vanishing li mi} and the definition of $T$-tensor,  we have 
   \begin{equation}
    0 =\partial_i\boldsymbol{\sigma} =(\delta_{i}\boldsymbol{\sigma})T^i_{jkr}=(\boldsymbol{\sigma}_{,1}\ell_{i}+\boldsymbol{\sigma}_{,2}m_i)\frac{\mathcal{I}_{;2}}{F}m^im_jm_km_r=\frac{\varepsilon\mathcal{I}_{;2}\boldsymbol{\sigma}_{,2}}{F}m_jm_km_r .
  \end{equation}
 Consequently,  the  Finsler metric $F$ satisfies $\boldsymbol{\sigma} T$-condition if and only if either  $F$ has vanishing $T$-tensor (i.e., $\mathcal{I}_{;2}=0$) or $\boldsymbol{\sigma}_{,2}=0$. The later gives $\boldsymbol{\sigma}_{,2}=\boldsymbol{\sigma}_{;2}=0$. Then, $\boldsymbol{\sigma}$ is a constant function,  by Lemma \ref{lemma of condition to homthety},  which contradicts $\boldsymbol{\sigma}$ is non-constant function. 
\end{proof}
From Proposition \ref{sigma-t-condition}, any Landsberg surface  satisfies the $\boldsymbol{\sigma} T$-condition is Berwaldian \cite{elgendi}.  Under the anisotropic conformal transformation \eqref{the anisotropic conformal transformation}, we see that the conformal factor depends on both position and direction arguments and the transformation of the $T$-tensor is not invariant:
\begin{align*}
\overline{T}_{ijhk}=&\frac{\varepsilon e^{3\phi}}{ \rho}\left[ T_{ijkh}+\frac{1}{2F\rho}\left(4\varepsilon\rho\phi_{;2;2}+\rho_{;2}(\mathcal{I}+2\varepsilon\phi_{;2}+\frac{\varepsilon\rho_{;2}}{2\rho})-\varepsilon\rho_{;2;2}\right) m_im_jm_hm_k\right].
\end{align*}
This leads us to define $\phi T$-condition, $\phi \overline{T}$-condition, horizontal $\phi T$-condition and horizontal $\phi \overline{T}$-condition and determine the conditions under which these properties coincide with the $T$-condition. In other words, we aim to find the conditions such that any Landsberg surface that satisfies these properties is a Berwald surface. In Finsler surfaces, the T-tensor components $ T^i_{jkr}$ are defined by $FT^i_{jkr}=\mathcal{I}_{;2}m^im_jm_km_r$. By using \eqref{transform of Berwald frame}, under the anisotropic conformal transformation \eqref{the anisotropic conformal transformation}. We have
\begin{align}\label{transform_(1,3)T_tensor}
\overline{F}\;\overline{T}^i_{jkr}=\mathcal{\overline{I}}_{;b}\overline{m}^i\overline{m}_j\overline{m}_k\overline{m}_r=e^{2\phi}\sqrt{\frac{\varepsilon}{\rho}}\mathcal{\overline{I}}_{;2}(m^i-\varepsilon\phi_{;2}\ell^{i})m_jm_km_r
\end{align} 
Also, from \cite[~\S 2]{second_paper} we have
\begin{align}
\overline{\mathcal{I}}_{;\,b}&=\sqrt{\varepsilon\rho} \;\overline{\mathcal{I}}_{;2},\label{I;b bar formula}\\
\overline{\mathcal{I}}_{,\,a}&=e^{-\phi} \,[\overline{\mathcal{I}}_{,1}-\frac{2\varepsilon}{F^2} Q\,\overline{\mathcal{I}}_{;2}],\label{I,a bar formula}\\
\overline{\mathcal{I}}_{,\,b}&= e^{-\phi}\sqrt{\varepsilon\rho}\;[\overline{\mathcal{I}}_{,2}-\phi_{;2}\,\overline{\mathcal{I}}_{,1}-\frac{\varepsilon}{F^2}(\varepsilon P+Q_{;2}-\varepsilon\mathcal{I}\,Q-2\phi_{;2}\,Q)\overline{\mathcal{I}}_{;2}]\label{I,b bar formula},
\end{align}
where 
\begin{align}
\overline{\mathcal{I}}_{;2}&=\frac{\sqrt{\varepsilon\rho}}{2\rho}\,[\rho_{;2}(\mathcal{I}+2\varepsilon\,\phi_{;2}+\frac{\varepsilon}{2}(\ln \rho)_{;2})+2\rho(\mathcal{I}_{;2}+2\varepsilon\phi_{;2;2})-\varepsilon\rho_{;2;2}].\label{I; 2 bar formula}\\
\overline{\mathcal{I}}_{,1}&=\frac{\sqrt{\varepsilon\rho}}{2\rho}\,[\rho_{,1}(\mathcal{I}+2\varepsilon\phi_{;2}+\frac{\varepsilon}{2} (\ln \rho)_{;2})+2\rho(\mathcal{I}_{,1}+2\varepsilon\phi_{;2,1})-\varepsilon\rho_{;2,1}].\label{I,1 bar formula}\\
\overline{\mathcal{I}}_{,2}&=\frac{\sqrt{\varepsilon\rho}}{2\rho}\,[\rho_{,2}(\mathcal{I}+2\varepsilon\phi_{;2}+\frac{\varepsilon}{2}(\ln \rho)_{;2})+2\rho(\mathcal{I}_{,2}+2\varepsilon\phi_{;2,2})-\varepsilon\rho_{;2,2}].\label{I,2 bar formula}
\end{align}
The Landsbergian property of $\overline{F}$ is characterized by $\overline{\mathcal{I}}_{,a}=0$ and the Berwaldian property of $\overline{F}$ is characterized by $\overline{\mathcal{I}}_{,a}=0=\overline{\mathcal{I}}_{,b}.$
\begin{definition}
The anisotropic conformal change \eqref{the anisotropic conformal transformation} is said to be satisfying the anisotropic $\phi T$-condition if $(\partial_{i}\phi)T^i_{jkr}=0$. Similarly, the anisotropic conformal change \eqref{the anisotropic conformal transformation} is said to be satisfying the anisotropic $\phi \overline{T}$-condition if $(\partial_{i}\phi)\overline{T}^i_{jkr}=0$.
\end{definition}

\begin{lemma}\label{lemma anisotropic phi T-condition}
  The anisotropic conformal change \eqref{the anisotropic conformal transformation} satisfies the anisotropic $\phi T$-condition if and only if either $m^i\partial_i\phi=0$ or $F$ has vanishing $T$-tensor.  Similarly, The anisotropic conformal change \eqref{the anisotropic conformal transformation} satisfies the anisotropic $\phi \overline{T}$-condition  if and only if  either $m^i\partial_i\phi=\varepsilon\phi_{;2}\ell^{i}\partial_i\phi$ or $\overline{F}$ has vanishing $\overline{T}$-tensor.
\end{lemma}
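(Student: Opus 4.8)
The plan is to exploit the fact that, on a Finsler surface equipped with the modified Berwald frame, the $(1,3)$-type $T$-tensor is rank-one along a single direction, namely $FT^i_{jkr}=\mathcal{I}_{;2}\,m^im_jm_km_r$. First I would contract this identity with $\partial_i\phi$. Decomposing $\partial_i\phi$ through \eqref{vertical and horizontal deri of f} and invoking the orthonormality relations \eqref{vanishing li mi}, only the $m^i$-component of $\partial_i\phi$ survives the contraction with $m^i$, giving
\[
(\partial_i\phi)T^i_{jkr}=\frac{\mathcal{I}_{;2}}{F}\,(m^i\partial_i\phi)\,m_jm_km_r.
\]
Since $m_jm_km_r$ is a nonvanishing tensor and $F>0$, the anisotropic $\phi T$-condition $(\partial_i\phi)T^i_{jkr}=0$ is equivalent to $\mathcal{I}_{;2}(m^i\partial_i\phi)=0$, which splits precisely into the two stated alternatives: $\mathcal{I}_{;2}=0$ (i.e. $F$ has vanishing $T$-tensor) or $m^i\partial_i\phi=0$.

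For the $\phi\overline{T}$-condition I would start from the transformation law \eqref{transform_(1,3)T_tensor}, which expresses $\overline{F}\,\overline{T}^i_{jkr}=e^{2\phi}\sqrt{\varepsilon/\rho}\;\overline{\mathcal{I}}_{;2}\,(m^i-\varepsilon\phi_{;2}\ell^{i})m_jm_km_r$. Dividing by $\overline{F}=e^{\phi}F$ and contracting with $\partial_i\phi$ (again using \eqref{vanishing li mi} and \eqref{vertical and horizontal deri of f}) now retains \emph{both} frame components, because of the $\ell^i$ term, yielding
\[
(\partial_i\phi)\overline{T}^i_{jkr}=\frac{e^{\phi}}{F}\sqrt{\frac{\varepsilon}{\rho}}\;\overline{\mathcal{I}}_{;2}\,\bigl(m^i\partial_i\phi-\varepsilon\phi_{;2}\,\ell^i\partial_i\phi\bigr)\,m_jm_km_r.
\]
Hence the anisotropic $\phi\overline{T}$-condition is equivalent to $\overline{\mathcal{I}}_{;2}\bigl(m^i\partial_i\phi-\varepsilon\phi_{;2}\,\ell^i\partial_i\phi\bigr)=0$, giving either $m^i\partial_i\phi=\varepsilon\phi_{;2}\,\ell^i\partial_i\phi$ or $\overline{\mathcal{I}}_{;2}=0$.

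The final step is to identify $\overline{\mathcal{I}}_{;2}=0$ with the condition that $\overline{F}$ has vanishing $\overline{T}$-tensor. Writing the $\overline{T}$-tensor in the barred frame as $\overline{F}\,\overline{T}_{ijkh}=\overline{\mathcal{I}}_{;\,b}\,\overline{m}_i\overline{m}_j\overline{m}_k\overline{m}_h$, the relation \eqref{I;b bar formula} gives $\overline{\mathcal{I}}_{;\,b}=\sqrt{\varepsilon\rho}\;\overline{\mathcal{I}}_{;2}$; since $\rho\neq0$ by \eqref{formula of rho}, vanishing of $\overline{\mathcal{I}}_{;\,b}$ is equivalent to $\overline{\mathcal{I}}_{;2}=0$, completing the equivalence.

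I do not expect a genuine obstacle here, since both statements reduce to factoring a scalar product of two factors. The only points requiring care are bookkeeping ones: ensuring that the $\ell^i$-component of $\partial_i\phi$ correctly drops out in the unbarred case (because $\ell^im_i=0$) but is genuinely retained in the barred case through the extra $-\varepsilon\phi_{;2}\ell^i$ term in \eqref{transform_(1,3)T_tensor}, and justifying the translation ``$\overline{F}$ has vanishing $\overline{T}$-tensor'' $\Longleftrightarrow\overline{\mathcal{I}}_{;2}=0$ via \eqref{I;b bar formula} rather than conflating it directly with $\overline{\mathcal{I}}_{;2}$.
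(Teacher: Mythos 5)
Your proposal is correct and follows essentially the same route as the paper: contract the rank-one expressions $FT^i_{jkr}=\mathcal{I}_{;2}m^im_jm_km_r$ and \eqref{transform_(1,3)T_tensor} with $\partial_i\phi$, use \eqref{vanishing li mi} to isolate $m^i\partial_i\phi$ (resp. $m^i\partial_i\phi-\varepsilon\phi_{;2}\ell^i\partial_i\phi$), and split the resulting product of scalars. The only cosmetic difference is that you justify the equivalence $\overline{\mathcal{I}}_{;2}=0\Leftrightarrow\overline{T}=0$ directly from \eqref{I;b bar formula}, whereas the paper cites a remark from its predecessor; the content is the same.
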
   
\begin{proof}
 The anisotropic conformal change \eqref{the anisotropic conformal transformation} satisfies the anisotropic $\phi T$-condition, by definition,  means that
\begin{equation*}
    0=(\partial_{i}\phi) T^i_{jkr}=\frac{\mathcal{I}_{;2}}{F}m_jm_km_r(m^i\partial_i\phi).
\end{equation*}
Which is equivalent to either $F$ has vanishing $T$-tensor or $m^i\partial_i\phi=0$. Similarly, from \eqref{transform_(1,3)T_tensor}, \eqref{the anisotropic conformal transformation} satisfies the anisotropic $\phi \overline{T}$-condition is equivalent to
  \begin{equation*}
   0= (\partial_{i}\phi)\overline{T}^i_{jkr}=e^{\phi}\sqrt{\frac{\varepsilon}{\rho}}\frac{\mathcal{\overline{I}}_{;2}}{F}(m^i\partial_i\phi-\varepsilon\phi_{;2}\ell^{i}\partial_i\phi)m_jm_km_r .
\end{equation*} 
Thereby,  the anisotropic $\phi \overline{T}$-condition is satisfied if and only if either $m^i\partial_i\phi=\varepsilon\phi_{;2}\ell^{i}\partial_i\phi$ or $\mathcal{\overline{I}}_{;2}=0$.  Then,  either  $\overline{F}$ has vanishing  $\overline{T}$-tensor, by \cite[Remark 4.1]{second_paper},  or $m^i\partial_i\phi=\varepsilon\phi_{;2}\ell^{i}\partial_i\phi$ .
\end{proof}
\begin{theorem}\label{in unicorn 1, 2}
    Let $\overline{F}= e^{\phi} F$ be the anisotropic conformal transformation  \eqref{the anisotropic conformal transformation}. Then we have:
    \begin{description}
        \item[(i)] If $F$ is a Landsberg metric  and \eqref{the anisotropic conformal transformation} satisfies the anisotropic $\phi T$-condition, provided that, $m^i\partial_i\phi\neq0$, then $F$ is  Berwaldian.  
    \item[(ii)] If $(M,\overline{F})$ is a Landsberg surface and \eqref{the anisotropic conformal transformation} satisfies the anisotropic $\phi \overline{T}$-condition, provided that, $m^i\partial_i\phi\neq\varepsilon\phi_{;2}\ell^{i}\partial_i\phi$, then $(M,\overline{F})$ is Berwaldian.  In other words, a non-Riemannian Landsberg surface  $(M,\overline{F})$  is  Berwaldian if  the anisotropic $\phi \overline{T}$-condition is satisfied,  provided that,  \eqref{the anisotropic conformal transformation} does not  $\overline{C}$-anisotropic conformal change.
    \end{description} 
\end{theorem}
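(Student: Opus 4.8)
The plan is to handle both parts through a single mechanism. Lemma \ref{lemma anisotropic phi T-condition} converts each $\phi T$-type condition, together with its accompanying proviso, into the vanishing of the relevant $T$-tensor, after which one commutation-formula computation upgrades the Landsberg property to the Berwald property. Recall that for a surface the Landsberg property of $F$ reads $\mathcal{I}_{,1}=0$ and the Berwald property reads $\mathcal{I}_{,1}=\mathcal{I}_{,2}=0$ (and likewise $\overline{\mathcal{I}}_{,a}=0$, resp. $\overline{\mathcal{I}}_{,a}=\overline{\mathcal{I}}_{,b}=0$, for $\overline{F}$), so in each case the only thing left to establish is the vanishing of the second h-scalar derivative of the main scalar.

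For part \textbf{(i)} I would proceed as follows. Since $m^i\partial_i\phi\neq0$ by hypothesis, the alternative $m^i\partial_i\phi=0$ of Lemma \ref{lemma anisotropic phi T-condition} is excluded, so the anisotropic $\phi T$-condition forces $F$ to have vanishing $T$-tensor, i.e. $\mathcal{I}_{;2}=0$. Combining this with the Landsberg hypothesis $\mathcal{I}_{,1}=0$, I apply the commutation formula \eqref{second comutation} to $f=\mathcal{I}$, namely $\mathcal{I}_{,1;2}-\mathcal{I}_{;2,1}=\mathcal{I}_{,2}$. Because $\mathcal{I}_{,1}$ and $\mathcal{I}_{;2}$ vanish identically, both terms on the left are zero, whence $\mathcal{I}_{,2}=0$. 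Thus $\mathcal{I}_{,1}=\mathcal{I}_{,2}=0$ and $F$ is Berwaldian. This also reproves, within the present framework, the fact quoted after Proposition \ref{sigma-t-condition} that a Landsberg surface satisfying the $T$-condition (equivalently the $\boldsymbol{\sigma}T$-condition) is Berwald.

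For part \textbf{(ii)} I would run the identical argument inside the geometry $(M,\overline{F})$. First, the proviso $m^i\partial_i\phi\neq\varepsilon\phi_{;2}\ell^i\partial_i\phi$ rules out the first alternative in the $\phi\overline{T}$-half of Lemma \ref{lemma anisotropic phi T-condition}, so the $\phi\overline{T}$-condition forces the $\overline{T}$-tensor of $\overline{F}$ to vanish; by \eqref{I;b bar formula} this is exactly $\overline{\mathcal{I}}_{;b}=0$ (equivalently $\overline{\mathcal{I}}_{;2}=0$). Since $(M,\overline{F})$ is again a conic pseudo-Finsler surface carrying its own modified Berwald frame $(\overline{\ell},\overline{m})$ and its own scalar derivatives, the commutation formula \eqref{second comutation} holds verbatim in the barred setting, reading $\overline{\mathcal{I}}_{,a;b}-\overline{\mathcal{I}}_{;b,a}=\overline{\mathcal{I}}_{,b}$. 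The Landsberg hypothesis gives $\overline{\mathcal{I}}_{,a}=0$, and we have just obtained $\overline{\mathcal{I}}_{;b}=0$, so both left-hand terms vanish and $\overline{\mathcal{I}}_{,b}=0$, i.e. $(M,\overline{F})$ is Berwaldian. Finally, the reformulation ``provided \eqref{the anisotropic conformal transformation} is not a $\overline{C}$-anisotropic change'' is just the proviso in disguise: for non-Riemannian $\overline{F}$, Proposition \ref{proposition1 of bar$FC$-conformal}\,\textbf{(ii)} says the change is $\overline{C}$-anisotropic exactly when $m^i\partial_i\phi-\varepsilon\phi_{;2}\ell^i\partial_i\phi=0$, so excluding this case is precisely the assumed inequality.

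The computations are short; the step I would flag as the main obstacle is justifying that the commutation formula \eqref{second comutation} may be transported unchanged to $(M,\overline{F})$ with the barred frame and the derivatives $(\,\cdot\,)_{,a},(\,\cdot\,)_{;b}$, and that ``vanishing $\overline{T}$-tensor'' is correctly identified with $\overline{\mathcal{I}}_{;b}=0$ through \eqref{I;b bar formula} rather than with some barred recombination of the unbarred derivatives as in \eqref{I; 2 bar formula}. Once these identifications are secured, both parts follow immediately.
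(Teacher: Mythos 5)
Your proposal is correct and follows essentially the same route as the paper: both use Lemma \ref{lemma anisotropic phi T-condition} together with the stated proviso (equivalently, via Proposition \ref{proposition1 of bar$FC$-conformal}\,\textbf{(ii)}, the failure of the $\overline{C}$-anisotropic property) to exclude the first alternative and force the vanishing of the relevant $T$-tensor, and then invoke the fact that a Landsberg surface with vanishing $T$-tensor is Berwaldian. The only difference is that the paper cites this last fact without proof, whereas you supply the short derivation $\mathcal{I}_{,1;2}-\mathcal{I}_{;2,1}=\mathcal{I}_{,2}$ (and its barred analogue) from the commutation formula \eqref{second comutation}; that derivation is valid and makes the argument more self-contained.
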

\begin{proof} 
\textbf{(i)} and first part of \textbf{(ii)} Follows from Lemma \ref{lemma anisotropic phi T-condition} and the fact that any Landsberg surface with a vanishing $T$-tensor is Berwaldian. The proof of second part of  \textbf{(ii)}:
    As, \eqref{the anisotropic conformal transformation}  does not satisfy  $\overline{C}$-anisotropic conformal change and $(M,\overline{F})$ is a non-Riemannian surface, then from Proposition \ref{proposition1 of bar$FC$-conformal}, we get
    \begin{equation}\label{phi T bar 1}
     m^i\partial_i\phi\neq\varepsilon\phi_{;2}\ell^{i}\partial_i\phi   .
    \end{equation}
    Since \eqref{the anisotropic conformal transformation} satisfies the anisotropic $\phi \overline{T}$-condition,  then from Lemma  \ref{lemma anisotropic phi T-condition} and \eqref{phi T bar 1}, we get $\overline{\mathcal{I}}_{;2}=0$, consequently, $(M,\overline{F})$ has vanishing $\overline{T}$-tensor. Hence $(M,\overline{F})$ is Berwaldian.    
\end{proof}

\begin{remark}
    Every $C$-anisotropic conformal transformation satisfies the $\phi T$-condition.  Correspondingly,  every $\overline{C}$-anisotropic conformal transformation satisfies the $\phi \overline{T}$-condition.  However, the converse is not true in general, that is,  not every anisotropic conformal transformation satisfies the $\phi T$-condition is $C$-anisotropic conformal transformation.
\end{remark}

\begin{definition}
The anisotropic conformal transformation \eqref{the anisotropic conformal transformation} is said to be satisfying the horizontal $\phi T$-condition if $(\delta_{i}\phi)T^i_{jkr}=0.$ Similarly, the anisotropic conformal transformation \eqref{the anisotropic conformal transformation} is said to be satisfying the horizontal $\phi\overline{T}$-condition if $(\delta_{i}\phi)\overline{T}^i_{jkr}=0.$  
\end{definition}

\begin{proposition}\label{proposition-horizotal-phi_T-condition}
Let $(M,F)$ be a conic pseudo-Finsler surface. Then we have the following:
\begin{description} 
\item[(i)] The anisotropic conformal transformation \eqref{the anisotropic conformal transformation} satisfies horizontal $\phi T$-condition if and only if either $F$ has vanishing $T$-tensor or $\phi_{,2}=0.$
\item[(ii)]  The anisotropic conformal transformation \eqref{the anisotropic conformal transformation} satisfies  horizontal $\phi\overline{T}$-condition if and only if $\overline{F}$ has vanishing $\overline{T}$-tensor or $\phi_{,2}=\phi_{,1}\phi_{;2}.$ 
\end{description}
\end{proposition}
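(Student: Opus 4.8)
The plan is to mirror the computations already used for the anisotropic $\phi T$-condition in Lemma~\ref{lemma anisotropic phi T-condition}, replacing the coordinate derivative $\partial_i\phi$ by the horizontal derivative $\delta_i\phi$ and exploiting its frame expansion $\delta_i\phi=\phi_{,1}\ell_i+\phi_{,2}m_i$ from \eqref{vertical and horizontal deri of f}. Both assertions are pure contraction identities in the modified Berwald frame, so no differential equation needs to be solved; everything reduces to the orthonormality relations \eqref{vanishing li mi}, exactly as in the proofs of Proposition~\ref{sigma-t-condition} and Theorem~\ref{theorem-horizotal-$C$-anisotropic}.

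For part \textbf{(i)} I would start from the $(1,3)$ $T$-tensor formula $FT^i_{jkr}=\mathcal{I}_{;2}\,m^im_jm_km_r$ and contract it against $\delta_i\phi=\phi_{,1}\ell_i+\phi_{,2}m_i$. The term carrying $\phi_{,1}$ contains the factor $\ell_i m^i=0$ and drops out, while the $\phi_{,2}$ term contains $m_i m^i=\varepsilon$, so that $(\delta_i\phi)T^i_{jkr}=\tfrac{\varepsilon\mathcal{I}_{;2}\phi_{,2}}{F}\,m_jm_km_r$. Since $m_jm_km_r$ is not identically zero, this vanishes precisely when $\mathcal{I}_{;2}=0$ (i.e.\ $F$ has vanishing $T$-tensor) or $\phi_{,2}=0$, which is exactly the stated dichotomy.

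For part \textbf{(ii)} I would use the transformed tensor from \eqref{transform_(1,3)T_tensor}, namely $\overline{F}\,\overline{T}^i_{jkr}=e^{2\phi}\sqrt{\tfrac{\varepsilon}{\rho}}\,\overline{\mathcal{I}}_{;2}(m^i-\varepsilon\phi_{;2}\ell^i)m_jm_km_r$, and again contract with $\delta_i\phi$. The only step requiring a little care is evaluating $(\phi_{,1}\ell_i+\phi_{,2}m_i)(m^i-\varepsilon\phi_{;2}\ell^i)$; invoking $\ell_i\ell^i=1$, $m_i m^i=\varepsilon$ and $\ell_i m^i=m_i\ell^i=0$, the cross terms collapse and this reduces to $\varepsilon(\phi_{,2}-\phi_{,1}\phi_{;2})$. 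Hence $(\delta_i\phi)\overline{T}^i_{jkr}$ is a nonzero scalar multiple of $\overline{\mathcal{I}}_{;2}(\phi_{,2}-\phi_{,1}\phi_{;2})\,m_jm_km_r$, which vanishes if and only if $\overline{\mathcal{I}}_{;2}=0$ or $\phi_{,2}=\phi_{,1}\phi_{;2}$. To finish, I would invoke \cite[Remark 4.1]{second_paper} to rephrase $\overline{\mathcal{I}}_{;2}=0$ as the vanishing of the $\overline{T}$-tensor, just as in Lemma~\ref{lemma anisotropic phi T-condition}.

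There is essentially no serious obstacle here: the argument is a direct frame contraction, and the only points demanding attention are the sign bookkeeping in the cross term of part \textbf{(ii)} and the correct appeal to the cited equivalence $\overline{\mathcal{I}}_{;2}=0\Leftrightarrow\overline{T}=0$, which is what turns the scalar condition $\overline{\mathcal{I}}_{;2}=0$ into the geometric statement about the $\overline{T}$-tensor in the proposition.
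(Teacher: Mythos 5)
Your proposal is correct and follows essentially the same route as the paper: both parts are obtained by contracting $\delta_i\phi=\phi_{,1}\ell_i+\phi_{,2}m_i$ against $FT^i_{jkr}=\mathcal{I}_{;2}m^im_jm_km_r$ (resp.\ the transformed tensor \eqref{transform_(1,3)T_tensor}) and using the orthonormality relations \eqref{vanishing li mi}, yielding the factors $\varepsilon\phi_{,2}\mathcal{I}_{;2}$ and $\varepsilon\overline{\mathcal{I}}_{;2}(\phi_{,2}-\phi_{,1}\phi_{;2})$ exactly as in the paper's computation. The sign bookkeeping in the cross term and the appeal to $\overline{\mathcal{I}}_{;2}=0\Leftrightarrow\overline{T}=0$ are both handled as the paper does.
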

\begin{proof}
\begin{description}
    \item[(i)] Assume that the anisotropic conformal transformation \eqref{the anisotropic conformal transformation} satisfies horizontal $\phi T$-condition, then we have 
\begin{equation*}
 0=(\delta_{i}\phi)T^i_{jkr}=(\phi_{,1}\ell_i+\phi_{,2}m_i)\mathcal{I}_{;2}m^im_jm_km_r=\varepsilon\phi_{,2}\mathcal{I}_{;2}m_jm_km_r,
\end{equation*}
which is equivalent to that either $F$ has vanishing $T$-tensor or $\phi_{,2}=0.$
\item[(ii)] From \eqref{transform_(1,3)T_tensor}, the anisotropic conformal transformation \eqref{the anisotropic conformal transformation}  satisfies  horizontal $\phi\overline{T}$-condition if and only if
\begin{align}
0=&(\delta_{i}\phi)\overline{T}^i_{jkr}=[(\phi_{,1}\ell_i+\phi_{,2}m_i)\mathcal{\overline{I}}_{;2}(m^im_jm_km_r-\varepsilon\phi_{;2}\ell^im_jm_km_r)]\nonumber\\
=&\mathcal{\overline{I}}_{;2}(\varepsilon \phi_{,2}m_jm_km_r-\varepsilon\phi_{,1}\phi_{;2}m_jm_km_r)].\nonumber
\end{align}
Hence, the anisotropic conformal transformation \eqref{the anisotropic conformal transformation}  satisfies  horizontal $\phi\overline{T}$-condition if and only if $\overline{F}$ has vanishing $\overline{T}$-tensor  or $\phi_{,2}=\phi_{,1}\phi_{;2}.$
\end{description}
\vspace*{-0.9 cm}\[\qedhere\]
\end{proof}

\begin{theorem}
    Assume that \eqref{the anisotropic conformal transformation} is a vertical $C$-anisotropic conformal change with $\phi_{;2}$ being horizontally constant. Then the following are equivalent:
    \begin{description}
        \item[(i)] The Finsler metric $\overline {F}$ is Landsbergian. 
        \item[(ii)] The anisotropic conformal change \eqref{the anisotropic conformal transformation} satisfies the horizontal $\phi\overline{T}$-condition.
        \item[(iii)] The Finsler metric $\overline {F}$ is Berwaldian or the two geodesic sprays $S$ and $\overline{S}$ are projectively equivalent.
    \end{description}
\end{theorem}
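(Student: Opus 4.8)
The plan is to first reduce the two standing hypotheses to a pair of scalar identities, and then to read off all three statements from the simplified expressions for $\overline{\mathcal{I}}_{,a}$ and $\overline{\mathcal{I}}_{,b}$. Since the change is vertical $C$-anisotropic, Proposition \ref{prposition vertical $C$-anisotropic} forces $F$ to be Riemannian, so $\mathcal{I}=0$ (and hence $\mathcal{I}_{;2}=\mathcal{I}_{,1}=\mathcal{I}_{,2}=0$). With $\mathcal{I}=0$, formulas \eqref{formula of sigma} and \eqref{formula of rho} give $\rho=\big(\phi_{;2;2}+(\phi_{;2})^2+\varepsilon\big)^{-1}$. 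The hypothesis that $\phi_{;2}$ is horizontally constant means $\phi_{;2,1}=\phi_{;2,2}=0$; I would feed this into the commutation formulas \eqref{second comutation} and \eqref{third comutation} applied to $f=\phi_{;2}$ to obtain $(\phi_{;2;2})_{,1}=\phi_{;2,1;2}-\phi_{;2,2}=0$ and, using $\mathcal{I}=0$, $(\phi_{;2;2})_{,2}=\phi_{;2,2;2}+\varepsilon\phi_{;2,1}=0$. Differentiating $\rho$ then yields $\rho_{,1}=\rho_{,2}=0$, and applying \eqref{second comutation} and \eqref{third comutation} once more to $f=\rho$ gives $\rho_{;2,1}=\rho_{,1;2}-\rho_{,2}=0$ and $\rho_{;2,2}=\rho_{,2;2}+\varepsilon\rho_{,1}=0$.

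The next step is to substitute $\mathcal{I}=0$ together with the vanishings $\rho_{,1}=\rho_{,2}=\phi_{;2,1}=\phi_{;2,2}=\rho_{;2,1}=\rho_{;2,2}=0$ into \eqref{I,1 bar formula} and \eqref{I,2 bar formula}. Every surviving term carries one of these vanishing factors, so $\overline{\mathcal{I}}_{,1}=\overline{\mathcal{I}}_{,2}=0$. Consequently \eqref{I,a bar formula} and \eqref{I,b bar formula} collapse to
\begin{align*}
\overline{\mathcal{I}}_{,a}=-\frac{2\varepsilon e^{-\phi}}{F^2}\,Q\,\overline{\mathcal{I}}_{;2},\qquad
\overline{\mathcal{I}}_{,b}=-\frac{\varepsilon e^{-\phi}\sqrt{\varepsilon\rho}}{F^2}\big(\varepsilon P+Q_{;2}-2\phi_{;2}Q\big)\overline{\mathcal{I}}_{;2}.
\end{align*}
These two identities are the engine of the whole proof: the Landsberg condition $\overline{\mathcal{I}}_{,a}=0$ becomes exactly $Q\,\overline{\mathcal{I}}_{;2}=0$, and the Berwald condition $\overline{\mathcal{I}}_{,a}=\overline{\mathcal{I}}_{,b}=0$ becomes $Q\,\overline{\mathcal{I}}_{;2}=0$ together with $(\varepsilon P+Q_{;2}-2\phi_{;2}Q)\overline{\mathcal{I}}_{;2}=0$.

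I would then show that each of \textbf{(i)}, \textbf{(ii)}, \textbf{(iii)} is equivalent to the single condition $Q=0$ or $\overline{\mathcal{I}}_{;2}=0$ (equivalently, $S$ and $\overline{S}$ are projectively equivalent or $\overline{F}$ has vanishing $\overline{T}$-tensor). For \textbf{(i)} this is immediate from $\overline{\mathcal{I}}_{,a}=0\Leftrightarrow Q\,\overline{\mathcal{I}}_{;2}=0$. For \textbf{(ii)}, Proposition \ref{proposition-horizotal-phi_T-condition}\,\textbf{(ii)} says the horizontal $\phi\overline{T}$-condition holds iff $\overline{\mathcal{I}}_{;2}=0$ or $\phi_{,2}=\phi_{,1}\phi_{;2}$; but using $\phi_{;2,1}=0$ and \eqref{second comutation} in \eqref{formula of Q only} gives $2Q=\varepsilon\rho F^2(\phi_{;2}\phi_{,1}-\phi_{,2})$, so $\phi_{,2}=\phi_{,1}\phi_{;2}\Leftrightarrow Q=0$, and \textbf{(ii)} matches. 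For \textbf{(iii)}, $Q=0$ is equivalent to projective equivalence of $S,\overline{S}$ by \cite[Theorem 5.2]{first_paper}, while the displayed reductions show that $\overline{F}$ is Berwaldian iff $\overline{\mathcal{I}}_{;2}=0$ or ($Q=0$ and $P=0$); the disjunction ``Berwaldian or projectively equivalent'' therefore simplifies, by the Boolean identity $[A\vee(B\wedge C)]\vee B=A\vee B$, to $\overline{\mathcal{I}}_{;2}=0$ or $Q=0$. This closes the cycle of equivalences.

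The routine but delicate part is the first step: keeping track of the order of the scalar derivatives and applying the commutation formulas so that all the horizontal derivatives of $\rho$, $\phi_{;2}$ and $\rho_{;2}$ genuinely vanish. The only conceptual subtlety is the Boolean bookkeeping in \textbf{(iii)}, where one must note that $Q\equiv0$ forces $Q_{;2}=0$, so that a Berwaldian $\overline{F}$ with $\overline{\mathcal{I}}_{;2}\neq0$ degenerates into full spray invariance $Q=P=0$ via \eqref{transform of coefficient spray}, which is then absorbed into the projective-equivalence branch.
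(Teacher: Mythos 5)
Your proposal is correct, and the computational core is the same as the paper's: use the vertical $C$-anisotropic hypothesis to force $F$ Riemannian (so $\mathcal{I}=0$), use the horizontal constancy of $\phi_{;2}$ to obtain $\rho_{,1}=\rho_{,2}=\rho_{;2,1}=\rho_{;2,2}=0$, and then read the three conditions off \eqref{I,a bar formula}--\eqref{I,2 bar formula} together with \eqref{formula of Q only} and the commutation formula \eqref{second comutation}. The route is organized differently, though. The paper proves a cycle \textbf{(i)}$\Leftrightarrow$\textbf{(ii)}, \textbf{(ii)}$\Rightarrow$\textbf{(iii)}, \textbf{(iii)}$\Rightarrow$\textbf{(i)}, and outsources two steps to the companion paper: the horizontal constancy of $\rho$ and $\rho_{;2}$ is quoted from \cite[Corollary 2.5]{second_paper}, and the equivalence \textbf{(i)}$\Leftrightarrow$\textbf{(ii)} is quoted from \cite[Proposition 4.7]{second_paper}. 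You instead derive the horizontal constancy directly from the commutation formulas \eqref{second comutation}--\eqref{third comutation}, and you collapse all three statements to the single normal form \lq\lq$Q=0$ or $\overline{\mathcal{I}}_{;2}=0$\rq\rq. This yields a self-contained argument and makes transparent \emph{why} the three conditions coincide (each is the vanishing of a product carrying the common factor $\overline{\mathcal{I}}_{;2}$), at the cost of being longer than the paper's citation-based version. One caveat you share with the paper: passing from the pointwise identity $Q\,\overline{\mathcal{I}}_{;2}=0$ to the disjunction \lq\lq$Q\equiv 0$ or $\overline{\mathcal{I}}_{;2}\equiv 0$\rq\rq\ (and your use of $Q\equiv 0\Rightarrow Q_{;2}=0$ in the Boolean step for \textbf{(iii)}) implicitly treats the alternatives as holding identically rather than pointwise; the paper adopts the same convention throughout, so your argument is at the same level of rigor as the original.
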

\begin{proof}
We assume that \eqref{the anisotropic conformal transformation} is a vertical $C$-anisotropic change. By Proposition \ref{prposition vertical $C$-anisotropic}, $F$ is a Riemannian metric and $\phi_{;2}$ is horizontally constant, by assumption. Then, according to \cite[Corollary 2.5]{second_paper}, we have that $\rho$ and $\rho_{;2}$ are horizontally constant. This implies that
\begin{equation}\label{horizontally_constant_rho}
\mathcal{I}=\rho_{,1}=\rho_{,2}=\rho_{;2,1}=\rho_{;2,2}=0.
\end{equation}
 \textbf{(i)} $\Longleftrightarrow$ \textbf{(ii)} It follows by \cite[Proposition 4.7]{second_paper} and Proposition \ref{proposition-horizotal-phi_T-condition} \textbf{(ii)}.\\
\textbf{(ii)}$\Longrightarrow$ \textbf{(iii)} 
Assume that \eqref{the anisotropic conformal transformation} satisfies the horizontal $\phi\overline{T}$-condition. By Proposition \ref{proposition-horizotal-phi_T-condition}, we have $\mathcal{\overline{I}}_{;2}=0$ or $\phi_{,2}=\phi_{;2}\phi_{,1}.$
First, if $\mathcal{\overline{I}}_{;2}=0$, substituting from \eqref{horizontally_constant_rho} into \eqref{I,1 bar formula} and \eqref{I,2 bar formula}, we get 
\begin{equation}\label{constant_main_scalar_bar}
    \mathcal{\overline{I}}_{;2}=0=\mathcal{\overline{I}}_{,1}=\mathcal{\overline{I}}_{,2}.
\end{equation}
Substituting \eqref{constant_main_scalar_bar} into \eqref{I,a bar formula} and \eqref{I,b bar formula}, we find that $\overline{F}$ is  Berwaldian. Secondly, if $\phi_{;2}\phi_{,1}=\phi_{,2}$ and $\phi_{;2}$ is horizontally constant, by assumption and using the commutation formula \eqref{second comutation}, we have 
\begin{equation}
    0=\phi_{;2}\phi_{,1}+\phi_{;2,1}-\phi_{,2}=\phi_{;2}\phi_{,1}+\phi_{,1;2}-\phi_{,2}=Q.
\end{equation}
\textbf{(iii)}$\Longrightarrow$ \textbf{(i)} Let $\overline{F}$ be a Berwald metric then $\overline{F}$ Landsbergian. Also, if the two geodesic spray $S$ and $\overline{S}$ are projectively equivalent,    using \eqref{horizontally_constant_rho}, we get 
\begin{equation}\label{Q_Ibar_1_vanish}
    Q=\mathcal{\overline{I}}_{,1}=0.
\end{equation}
Substituting \eqref{Q_Ibar_1_vanish} into \eqref{I,a bar formula}, we find that $\overline{F}$ is Landsbergian.
\end{proof}

\begin{definition}
The proper anisotropic conformal change \eqref{the anisotropic conformal transformation} is said to be satisfying the vertical $\phi T$-condition if $(\dot{\partial}_i\phi)T^i_{jkr}=0$.  Similarly,  \eqref{the anisotropic conformal transformation} is said to be satisfying the vertical $\phi\overline{T}$-condition if $(\dot{\partial}_i\phi)\overline{T}^i_{jkr}=0.$
\end{definition}
\begin{lemma}\label{lemma_vertical_phi_T-condition}
Let $(M,F)$ be a conic pseudo-Finsler surface. The anisotropic conformal change  \eqref{the anisotropic conformal transformation} satisfies the vertical $\phi T$-condition if and only if $F$ has a vanishing $T$-tensor.  
\end{lemma}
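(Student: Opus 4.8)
The plan is to carry out a direct contraction exactly as in the vertical $C$-anisotropic case (Proposition \ref{prposition vertical $C$-anisotropic}), since the vertical $\phi T$-condition differs only in that the Cartan tensor $C^i_{jk}$ is replaced by the $(1,3)$ $T$-tensor $T^i_{jkr}$. First I would substitute the explicit expression $FT^i_{jkr}=\mathcal{I}_{;2}\,m^im_jm_km_r$ into the defining condition $(\dot{\partial}_i\phi)T^i_{jkr}=0$. The only nontrivial contraction appearing is $(\dot{\partial}_i\phi)m^i$, and by the very definition of the v-scalar derivative in \eqref{vertical and horizontal deri of f}, namely $\phi_{;2}=\varepsilon F(\dot{\partial}_i\phi)m^i$, this equals $\varepsilon\phi_{;2}/F$.

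Feeding this in gives
\begin{equation*}
(\dot{\partial}_i\phi)T^i_{jkr}=\frac{\mathcal{I}_{;2}}{F}(\dot{\partial}_i\phi)m^im_jm_km_r=\frac{\varepsilon\,\mathcal{I}_{;2}\,\phi_{;2}}{F^2}\,m_jm_km_r.
\end{equation*}
Hence the vertical $\phi T$-condition holds if and only if $\mathcal{I}_{;2}\phi_{;2}=0$. Because the change \eqref{the anisotropic conformal transformation} is assumed proper, we have $\phi_{;2}\neq 0$, so this forces $\mathcal{I}_{;2}=0$, which is precisely the statement that $F$ has vanishing $T$-tensor. Conversely, $\mathcal{I}_{;2}=0$ makes the displayed expression vanish identically, establishing both directions at once.

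There is no real obstacle here: the argument is a one-line contraction, with the identity $(\dot{\partial}_i\phi)m^i=\varepsilon\phi_{;2}/F$ doing all the work and the properness hypothesis $\phi_{;2}\neq0$ discarding the spurious branch. The only point warranting a moment of care is keeping track of the $\varepsilon$'s and the two factors of $F$ (one from the definition of the $(1,3)$ $T$-tensor and one from the normalization of $\phi_{;2}$), but these are routine bookkeeping rather than a genuine difficulty.
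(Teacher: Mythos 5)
Your proposal is correct and follows essentially the same one-line contraction as the paper's own proof, using $(\dot{\partial}_i\phi)m^i=\varepsilon\phi_{;2}/F$ and the properness hypothesis $\phi_{;2}\neq0$ to reduce the condition to $\mathcal{I}_{;2}=0$. (Your power of $F$ in the final display is in fact the more careful bookkeeping; the paper's displayed $F$ versus your $F^2$ is an immaterial typo on its side.)
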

\begin{proof}
We have, by  \eqref{vanishing li mi}, 
\begin{equation*}
(\dot{\partial}_{i}\phi) T^i_{jkr}=\frac{\mathcal{I}_{;2}}{F}(\dot{\partial}_{i}\phi) m^im_jm_km_r=\frac{\varepsilon\mathcal{I}_{;2}}{F}\phi_{;2}m_{j}m_{k}m_r.
\end{equation*}
 Consequently, the anisotropic conformal transformation \eqref{the anisotropic conformal transformation} satisfies the vertical $\phi T$-condition if and only if $F$ has a vanishing $T$-tensor.
\end{proof}
According to Lemma \ref{lemma_vertical_phi_T-condition}, we have
\begin{remark}\label{remark_vertical_phi_condition}
\begin{description}
    \item[(i)]The anisotropic conformal transformation \eqref{the anisotropic conformal transformation} satisfies the vertical $\phi\overline{T}$-condition if and only if $\overline{F}$ has a vanishing $\overline{T}$-tensor.
    \item[(ii)] The $\phi T$-condition is equivalent to the $T$-condition. 
\end{description}
\end{remark}
From Remark \ref{remark_vertical_phi_condition} and the fact that any Landsberg surface satisfies the $T$-condition is Berwaldian, we have 
\begin{corollary}
    Let $F$ be anisotropically conformal to $\overline{F}$ by \eqref{the anisotropic conformal transformation}. Then, we have the following:
    \begin{description}
        \item[(i)] If $F$ is a Landsberg metric and  \eqref{the anisotropic conformal transformation} satisfies the vertical $\phi T$-condition, then, $F$ is Berwaldian. 
        \item[(ii)] If $\overline{F}$ is a Landsberg metric and \eqref{the anisotropic conformal transformation} satisfies the vertical $\phi\overline{T}$-condition, then, $\overline{F}$ is Berwaldian. 
    \end{description}
\end{corollary}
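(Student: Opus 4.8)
The plan is to obtain both statements as immediate consequences of Lemma \ref{lemma_vertical_phi_T-condition} and Remark \ref{remark_vertical_phi_condition}, each of which converts a vertical $\phi$-type condition into the vanishing of the relevant $T$-tensor, combined with the classical two-dimensional fact that a Landsberg surface whose $T$-tensor vanishes is automatically Berwaldian. The only point carrying genuine mathematical content is the justification of this last fact, and I would supply it in one line from the commutation formula \eqref{second comutation}; everything else is direct substitution and bookkeeping.

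For part \textbf{(i)} I would argue as follows. Because \eqref{the anisotropic conformal transformation} satisfies the vertical $\phi T$-condition, Lemma \ref{lemma_vertical_phi_T-condition} gives that $F$ has vanishing $T$-tensor, i.e. $\mathcal{I}_{;2}=0$. The hypothesis that $F$ is Landsbergian means $\mathcal{I}_{,1}=0$. Taking $f=\mathcal{I}$ in \eqref{second comutation} yields $\mathcal{I}_{,2}=\mathcal{I}_{,1;2}-\mathcal{I}_{;2,1}$, and since $\mathcal{I}_{,1}=0$ annihilates the first term while $\mathcal{I}_{;2}=0$ annihilates the second, we obtain $\mathcal{I}_{,2}=0$. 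Hence $\mathcal{I}_{,1}=\mathcal{I}_{,2}=0$, which is precisely the Berwaldian characterization, so $F$ is Berwaldian. Part \textbf{(ii)} is the same computation carried out in $(M,\overline{F})$: by Remark \ref{remark_vertical_phi_condition}\textbf{(i)} the vertical $\phi\overline{T}$-condition is equivalent to $\overline{F}$ having vanishing $\overline{T}$-tensor, i.e. $\overline{\mathcal{I}}_{;b}=0$ (equivalently $\overline{\mathcal{I}}_{;2}=0$ by \eqref{I;b bar formula}), while the Landsberg hypothesis on $\overline{F}$ reads $\overline{\mathcal{I}}_{,a}=0$. Applying the commutation formula intrinsic to $(M,\overline{F})$ to $f=\overline{\mathcal{I}}$ gives $\overline{\mathcal{I}}_{,b}=\overline{\mathcal{I}}_{,a;b}-\overline{\mathcal{I}}_{;b,a}=0$, so $\overline{\mathcal{I}}_{,a}=0=\overline{\mathcal{I}}_{,b}$ and $\overline{F}$ is Berwaldian.

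I do not anticipate a real obstacle, since the entire difficulty has already been isolated in the two preceding results. The only step deserving a moment's care is in part \textbf{(ii)}, where the commutation identity invoked is the one belonging to the surface $(M,\overline{F})$ rather than to $(M,F)$; but $\overline{F}$ is again a conic pseudo-Finsler surface equipped with its own modified Berwald frame, so the analogue of \eqref{second comutation} holds verbatim for the barred scalar derivatives and the deduction is identical to that of part \textbf{(i)}.
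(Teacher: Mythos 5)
Your proposal is correct and follows essentially the same route as the paper: the paper simply derives the corollary from Remark \ref{remark_vertical_phi_condition} (vertical $\phi T$-condition $\Leftrightarrow$ vanishing $T$-tensor) together with the known fact that a Landsberg surface satisfying the $T$-condition is Berwaldian. The only difference is that you additionally spell out the one-line proof of that classical fact via the commutation formula \eqref{second comutation} applied to $\mathcal{I}$ (and its barred analogue for $\overline{F}$), which the paper leaves as a citation; your computation there is valid.
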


\section{Special choice of the conformal factor}\label{special conformal factor}

In this section, we assume that $F$ is a non-Riemannian metric and the conformal factor of the anisotropic conformal transformation \eqref{the anisotropic conformal transformation} is the main scalar $\mathcal{I}$ of $F$, that is,  
\begin{equation}\label{conformal factor I}
\overline{F}(x,y)=e^{\mathcal{I}(x,y)}F(x,y).
\end{equation}
From \eqref{formula of sigma}-\eqref{formula of P only} then we have the following:
 \begin{align}
  \sigma&=\mathcal{I}_{;2;2}+\varepsilon \mathcal{I}\mathcal{I}_{;2}+2\,(\mathcal{I}_{;2})^{2},\label{formula of sigma phi=I}\\
2Q&=\varepsilon\rho F^2(\mathcal{I}_{;2}\mathcal{I}_{,1}+\mathcal{I}_{,1;2}-2\mathcal{I}_{,2}),\label{formula of Q only phi=I}\\
2P&=-\rho F^2\mathcal{I}_{;2}(\mathcal{I}_{;2}\mathcal{I}_{,1}+\mathcal{I}_{,1;2}-2\mathcal{I}_{,2})+F^2\mathcal{I}_{,1}.\label{formula of P only phi=I}
\end{align}
Furthermore, from \eqref{formula of Q only phi=I} and \eqref{formula of P only phi=I}, we get
\begin{equation}\label{relation between P and Q phi=I}
2\varepsilon \mathcal{I}_{;2}Q+2P=F^2\mathcal{I}_{,1}.
\end{equation}

 \begin{proposition}
 Under  the anisotropic conformal change \eqref{conformal factor I},  we have the following:
      \begin{description}
        \item[(i)] If the conic pseudo-Finsler surface $(M,F)$ has vanishing $T$-tensor,  then the anisotropic conformal change \eqref{conformal factor I} reduces to isotropic conformal change. 
        \item[(ii)] If $(M,F)$ is a Berwald surface, then the geodesic spray is invariant.
        \item[(iii)] If $(M,F)$ is a Landsberg surface, then the geodesic spray of $\overline{F}$  has the the coefficients \\ $\overline{G}^i=G^i-\varepsilon\rho F^2\mathcal{I}_{,2}m^i+\rho F^2\mathcal{I}_{;2}\mathcal{I}_{,2}\ell^i.$  
    \end{description}  
    \end{proposition}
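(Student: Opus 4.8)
The plan is to reduce all three parts to the single spray–transformation law \eqref{transform of coefficient spray}, namely $\overline{G}^{i}=G^{i}+Qm^{i}+P\ell^{i}$, and to read off the vanishing or the explicit values of $Q$ and $P$ from the specialized expressions \eqref{formula of Q only phi=I} and \eqref{formula of P only phi=I}, in which the conformal factor has been set equal to $\mathcal{I}$. The only geometric inputs needed are the modified-Berwald-frame characterizations of the relevant classes of surfaces: vanishing $T$-tensor $\Longleftrightarrow \mathcal{I}_{;2}=0$ (directly from $FT_{ijkh}=\mathcal{I}_{;2}m_im_jm_km_h$), Landsberg $\Longleftrightarrow \mathcal{I}_{,1}=0$, and Berwald $\Longleftrightarrow \mathcal{I}_{,1}=\mathcal{I}_{,2}=0$ (i.e. $\delta_i\mathcal{I}=0$), all consistent with the Landsbergian/Berwaldian descriptions of $\overline{F}$ via $\overline{\mathcal{I}}_{,a}$ and $\overline{\mathcal{I}}_{,b}$ recorded before the statement.

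For \textbf{(i)} I would start from the hypothesis $\mathcal{I}_{;2}=0$. Since $\mathcal{I}$ is $h(0)$ we automatically have $\mathcal{I}_{;1}=y^{i}\dot{\partial}_i\mathcal{I}=0$, so by the defining relation $F\dot{\partial}_i\mathcal{I}=\mathcal{I}_{;1}\ell_i+\mathcal{I}_{;2}m_i$ from \eqref{vertical and horizontal deri of f} we obtain $\dot{\partial}_i\mathcal{I}=0$. Hence the conformal factor $\mathcal{I}$ depends on the positional argument alone, which is exactly the statement that \eqref{conformal factor I} is an isotropic conformal change.

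For \textbf{(ii)} and \textbf{(iii)} I would simply substitute the frame characterizations into \eqref{formula of Q only phi=I} and \eqref{formula of P only phi=I}. In the Berwald case $\mathcal{I}_{,1}=\mathcal{I}_{,2}=0$; since $\mathcal{I}_{,1}$ vanishes identically so does its $v$-derivative $\mathcal{I}_{,1;2}$, whence $2Q=\varepsilon\rho F^{2}(\mathcal{I}_{;2}\mathcal{I}_{,1}+\mathcal{I}_{,1;2}-2\mathcal{I}_{,2})=0$ and likewise $2P=-\rho F^{2}\mathcal{I}_{;2}(\mathcal{I}_{;2}\mathcal{I}_{,1}+\mathcal{I}_{,1;2}-2\mathcal{I}_{,2})+F^{2}\mathcal{I}_{,1}=0$; then \eqref{transform of coefficient spray} gives $\overline{G}^{i}=G^{i}$. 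In the Landsberg case only $\mathcal{I}_{,1}=0$ is imposed (so again $\mathcal{I}_{,1;2}=0$), and the same two formulas collapse to $Q=-\varepsilon\rho F^{2}\mathcal{I}_{,2}$ and $P=\rho F^{2}\mathcal{I}_{;2}\mathcal{I}_{,2}$; inserting these into \eqref{transform of coefficient spray} yields $\overline{G}^{i}=G^{i}-\varepsilon\rho F^{2}\mathcal{I}_{,2}m^{i}+\rho F^{2}\mathcal{I}_{;2}\mathcal{I}_{,2}\ell^{i}$, as claimed.

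The computations are routine substitutions, so there is no serious analytic obstacle; the only points that need care are the two auxiliary facts used along the way. First, the passage from $\mathcal{I}_{,1}\equiv 0$ to $\mathcal{I}_{,1;2}=0$ must be justified, which holds because the $v$-scalar derivative of a function that vanishes identically on $\mathcal{A}$ vanishes. Second, one must correctly pin down the Berwald-frame characterizations of the Landsberg and Berwald conditions, that Landsberg is the single scalar equation $\mathcal{I}_{,1}=0$ while Berwald is the pair $\mathcal{I}_{,1}=\mathcal{I}_{,2}=0$; once these are fixed, each assertion is an immediate specialization of the spray law.
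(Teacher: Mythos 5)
Your proposal is correct and follows essentially the same route as the paper: part (i) via the equivalence of the vanishing $T$-tensor with $\mathcal{I}$ being a function of position only, and parts (ii) and (iii) by substituting the frame characterizations $\mathcal{I}_{,1}=\mathcal{I}_{,2}=0$ (Berwald) and $\mathcal{I}_{,1}=0$ (Landsberg) into \eqref{formula of Q only phi=I}, \eqref{formula of P only phi=I} and then \eqref{transform of coefficient spray}. You in fact supply slightly more detail than the paper does, e.g.\ justifying $\mathcal{I}_{,1;2}=0$ and deriving $\dot{\partial}_i\mathcal{I}=0$ from $\mathcal{I}_{;1}=\mathcal{I}_{;2}=0$.
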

\begin{proof}
    \begin{description}
\item[(i)] It follows from the fact that a Finsler $F$ has a vanishing $T$-tensor if and only if the main scalar (conformal factor) is a function of $x$ only. 
\item[(ii)] Let $(M,F)$ be a Berwald surface, then $\mathcal{I}_{,1}=\mathcal{I}_{,2}=0$. Substituting into \eqref{formula of Q only phi=I} and \eqref{formula of P only phi=I}, we get $P=Q=0$. Then, by \eqref{transform of coefficient spray}, the geodesic spray is invariant under the anisotropic change.
\item[(iii)] Since $F$ is  Landsbergian, that is,  $\mathcal{I}_{,1}=0$.  Thus,   \eqref{formula of Q only phi=I} and \eqref{formula of P only phi=I} become $Q=- \varepsilon\rho F^2 \mathcal{I}_{,2}$ and 
$P=\rho F^2\mathcal{I}_{;2}\,\mathcal{I}_{,2}$.  Hence, the result follows from \eqref{transform of coefficient spray}.
    \end{description}
    \vspace*{-0.9 cm}\[\qedhere\]
\end{proof}
 
\begin{lemma}
    Assume that the anisotropic conformal change \eqref{conformal factor I} satisfies the $C$-anisotropic conformal change. Then, $F$ is weakly Berwaldian if and only if $G^i_km^km_i=0.$
\end{lemma}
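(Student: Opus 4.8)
The plan is to reduce the statement to a condition purely on the main scalar and then to invoke the surface characterization of the mean Berwald curvature. First I would unfold the two standing hypotheses. Since \S\ref{special conformal factor} assumes $F$ to be non-Riemannian and the change \eqref{conformal factor I} is $C$-anisotropic (hence proper, so that $\mathcal{I}_{;2}=\phi_{;2}\neq0$), Proposition \ref{proposition1 of bar$FC$-conformal}\textbf{(i)} applied with $\phi=\mathcal{I}$ forces $m^i\partial_i\mathcal{I}=0$. Feeding $\phi=\mathcal{I}$ into the equivalence \eqref{equivalence 2 of C bar} (equivalently into \eqref{eq}\textbf{(b)}) turns this into the single scalar identity
\begin{equation*}
\varepsilon F\,\mathcal{I}_{,2}+\mathcal{I}_{;2}\,G^i_k m^k m_i=0 .
\end{equation*}
Because $\mathcal{I}_{;2}\neq0$, this already gives the pivotal equivalence $G^i_k m^k m_i=0\Longleftrightarrow\mathcal{I}_{,2}=0$, so the whole lemma collapses to showing that, under these hypotheses, $F$ is weakly Berwaldian if and only if $\mathcal{I}_{,2}=0$.

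For the remaining equivalence I would work out the mean Berwald (E\nobreakdash-)curvature $E_{ij}=\tfrac12\dot\partial_i\dot\partial_j(\dot\partial_m G^m)$ in the modified Berwald frame. A homogeneity argument ($\dot\partial_m G^m$ is $h(1)$, so $y^iE_{ij}=0$) shows that in two dimensions $E_{ij}=\tfrac{1}{F}\mathcal{E}\,m_i m_j$ for a single $h(0)$ scalar $\mathcal{E}$, whence $F$ is weakly Berwaldian $\Longleftrightarrow\mathcal{E}=0\Longleftrightarrow E_{ij}m^im^j=0$. Writing $H:=\dot\partial_m G^m$ and computing $E_{ij}m^im^j$ with the frame rules $\dot\partial_m\ell^m=1/F$, $\dot\partial_m m^m=-\mathcal{I}/F$ and $F\dot\partial_j m_i=-(\ell_i-\varepsilon\mathcal{I}m_i)m_j$, I expect the weakly-Berwald condition to reduce to
\begin{equation*}
H_{;2;2}+\varepsilon\mathcal{I}\,H_{;2}+\varepsilon H=0 .
\end{equation*}
The task is then to show this left-hand side is a nonvanishing multiple of $\mathcal{I}_{,2}$. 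Here I would expand $H$ through the surface spray decomposition of Remark \ref{projective surfaces}, namely $2G^i=(y^r\partial_r F)\ell^i+\tfrac{F^2(\dot\partial_2\partial_1 F-\dot\partial_1\partial_2 F)}{h}m^i$, and simplify the resulting v-scalar derivatives by means of the commutation formulas \eqref{second comutation} and \eqref{third comutation}, using the constraint $m^i\partial_i\mathcal{I}=0$ (equivalently the displayed identity of the first paragraph) to trade the spray terms for $\mathcal{I}_{,2}$.

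The hard part will be precisely this last reduction: turning the intrinsic vanishing $E_{ij}=0$ into the clean frame condition $\mathcal{I}_{,2}=0$. The first paragraph is a mechanical application of Proposition \ref{proposition1 of bar$FC$-conformal} and \eqref{equivalence 2 of C bar}, whereas identifying the E-curvature scalar $\mathcal{E}$ with a multiple of $\mathcal{I}_{,2}$ requires carrying the frame derivatives carefully through $\dot\partial_m G^m$ and genuinely exploiting the $C$-anisotropic constraint; this is where all the computational weight lies and where a stray term (a leftover $\mathcal{I}_{,1}$ or a Gauss-curvature $R$ contribution from \eqref{first comutation}) could most easily slip in. Once $\mathcal{E}=c\,\mathcal{I}_{,2}$ with $c\neq0$ is secured, combining it with the first paragraph yields $F$ weakly Berwaldian $\Longleftrightarrow\mathcal{I}_{,2}=0\Longleftrightarrow G^i_k m^k m_i=0$, as claimed.
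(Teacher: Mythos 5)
Your first paragraph is precisely the paper's proof: Proposition \ref{proposition1 of bar$FC$-conformal}\textbf{(i)} with $\phi=\mathcal{I}$ (and $F$ non-Riemannian, $\mathcal{I}_{;2}=\phi_{;2}\neq0$ by properness) combined with \eqref{equivalence 2 of C bar} gives $\mathcal{I}_{,2}=-\tfrac{\varepsilon\mathcal{I}_{;2}}{F}G^i_km^km_i$, whence $G^i_km^km_i=0\Longleftrightarrow\mathcal{I}_{,2}=0$. The paper then simply invokes, without comment, the classical two-dimensional characterization of weak Berwald surfaces by $\mathcal{I}_{,2}=0$ and stops, whereas you set out to rederive that characterization via the mean Berwald curvature; your setup is correct ($H=\dot\partial_mG^m$ is $h(1)$, so $y^iE_{ij}=0$ and $E_{ij}$ is a scalar multiple of $m_im_j$, the scalar being proportional to $H_{;2;2}+\varepsilon\mathcal{I}H_{;2}+\varepsilon H$), but you leave the final identification of that scalar with a nonvanishing multiple of $\mathcal{I}_{,2}$ as an expectation rather than a completed computation. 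So as a self-contained argument your proof is unfinished at exactly the step you flag, though the missing statement is a standard surface result that the paper itself also uses without proof, and everything you do carry out agrees with the paper's (one-line) argument.
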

\begin{proof}
    From Proposition \ref{proposition1 of bar$FC$-conformal} and \eqref{equivalence 2 of C bar}, the anisotropic conformal transformation \eqref{conformal factor I} satisfies the $C$-anisotropic conformal change if and only if $\mathcal{I}_{,2}=-\frac{\varepsilon\phi_{;2}}{F}G^i_{k}m^k m_i.$ Then $F$ is weakly Berwald if and only if $G^i_km^km_i=0.$
\end{proof}
Let $\overline{F}$ be a non-Riemannian metric. From Proposition \ref{proposition1 of bar$FC$-conformal}: the property of $C$-anisotropic conformal is invariant if and only if
$$\ell^i\partial_i\mathcal{I}=0\Longleftrightarrow \mathcal{I}_{,1}=-\frac{2}{F^2}\mathcal{I}_{;2}G^k\;m_k .$$ 

\begin{lemma}
 Under  the anisotropic conformal change \eqref{conformal factor I},     if  $\overline{F}$ is  a non-Riemannian metric and the property of $C$-anisotropic conformal is invariant,  then, the Finsler metric $F$ is Landsbergian if and only if it is projectively flat metric.
\end{lemma}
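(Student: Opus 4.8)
The plan is to read off the equivalence directly from the invariance relation recorded immediately above the statement, so that essentially no new computation is required. First I would fix the standing hypotheses of this section: $F$ is non-Riemannian and the change \eqref{conformal factor I} is proper, so the conformal factor $\phi=\mathcal{I}$ satisfies $\mathcal{I}_{;2}=\phi_{;2}\neq0$. Together with the assumption that $\overline{F}$ is also non-Riemannian, this places us exactly within the hypotheses of Proposition \ref{proposition1 of bar$FC$-conformal}\,\textbf{(iii)}: the invariance of the $C$-anisotropic property is equivalent to $\ell^i\partial_i\mathcal{I}=0$, which by the equivalence \eqref{equivalence 3 of C bar} (with $\phi$ replaced by $\mathcal{I}$) is precisely the relation
$$\mathcal{I}_{,1}=-\frac{2}{F^2}\,\mathcal{I}_{;2}\,G^k m_k.$$
This single identity is the engine of the whole argument.

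Next I would recall the two relevant characterizations in the modified Berwald frame: $F$ is Landsbergian if and only if $\mathcal{I}_{,1}=0$, and by Remark \ref{projective surfaces} the surface $F$ is projectively flat if and only if $G^k m_k=0$. Feeding these into the displayed relation settles both directions simultaneously: since $F\neq0$ and $\mathcal{I}_{;2}\neq0$, the coefficient $-\tfrac{2}{F^2}\mathcal{I}_{;2}$ is nonvanishing, so $\mathcal{I}_{,1}=0$ holds if and only if $G^k m_k=0$. Hence $F$ is Landsbergian precisely when it is projectively flat.

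There is essentially no obstacle here; the content is entirely absorbed into the invariance relation and the properness condition $\mathcal{I}_{;2}\neq0$. The only point requiring care is to invoke this nonvanishing of $\mathcal{I}_{;2}$ when cancelling it from the identity — without it one could not deduce $G^k m_k=0$ from $\mathcal{I}_{,1}=0$ — and to ensure that both directions of the equivalence appeal to the one relation above rather than to separate computations.
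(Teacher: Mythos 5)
Your argument is correct and is exactly the one the paper intends: the lemma is stated immediately after the displayed invariance relation $\mathcal{I}_{,1}=-\tfrac{2}{F^2}\mathcal{I}_{;2}G^k m_k$, and the equivalence follows by cancelling the nonzero factor $\mathcal{I}_{;2}$ (guaranteed by properness, which is built into the definition of a $C$-anisotropic change) and invoking the characterizations $\mathcal{I}_{,1}=0$ for Landsberg and $G^k m_k=0$ for projective flatness. Your handling of the hypotheses ($F$ and $\overline{F}$ non-Riemannian, so Proposition \textbf{(iii)} applies) matches the paper's implicit reasoning.
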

\begin{theorem}
    The Finsler surface $(M,F)$ is Berwaldian if one of the following is  satisfied:
    \begin{description}
        \item[(i)]$(M,F)$ is a Landsberg surface and \eqref{conformal factor I} is horizontal $C$-anisotropic conformal transformation.
        \item[(ii)]$(M,F)$ is a Landsberg or weak Berwald  surface and \eqref{conformal factor I} is horizontal $\overline{C}$-anisotropic conformal transformation.
    \end{description}
\end{theorem}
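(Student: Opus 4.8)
The plan is to recognise that, for a surface, the Berwald property is equivalent to the pair of scalar equations $\mathcal{I}_{,1}=0$ and $\mathcal{I}_{,2}=0$, and then to extract each of these from the two hypotheses separately. I would use the dictionary already in force in this section: $F$ is Landsbergian iff $\mathcal{I}_{,1}=0$, $F$ is weakly Berwaldian iff $\mathcal{I}_{,2}=0$ (equivalently $G^i_km^km_i=0$, as in the lemma just above), and $F$ is Berwaldian iff $\mathcal{I}_{,1}=\mathcal{I}_{,2}=0$. Since the conformal factor here is the main scalar, $\phi=\mathcal{I}$, the scalar derivatives occurring in Theorem \ref{theorem-horizotal-$C$-anisotropic} specialise to $\phi_{,1}=\mathcal{I}_{,1}$, $\phi_{,2}=\mathcal{I}_{,2}$ and $\phi_{;2}=\mathcal{I}_{;2}$, and properness of \eqref{conformal factor I} means $\mathcal{I}_{;2}\neq0$.

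For \textbf{(i)} I would invoke Theorem \ref{theorem-horizotal-$C$-anisotropic}\textbf{(i)}: since $F$ is non-Riemannian by the standing assumption of the section, the horizontal $C$-anisotropic hypothesis forces $\phi_{,2}=0$, i.e. $\mathcal{I}_{,2}=0$. The Landsberg hypothesis supplies $\mathcal{I}_{,1}=0$. Hence $\mathcal{I}_{,1}=\mathcal{I}_{,2}=0$ and $F$ is Berwaldian.

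For \textbf{(ii)} I would feed the horizontal $\overline{C}$-anisotropic hypothesis into Theorem \ref{theorem-horizotal-$C$-anisotropic}\textbf{(ii)} with $\phi=\mathcal{I}$, which (in the non-Riemannian $\overline{F}$ branch) produces the single relation $\mathcal{I}_{,2}=\mathcal{I}_{,1}\,\mathcal{I}_{;2}$. If $F$ is Landsbergian, then $\mathcal{I}_{,1}=0$ collapses this to $\mathcal{I}_{,2}=0$, so $F$ is Berwaldian. If instead $F$ is weakly Berwaldian, then $\mathcal{I}_{,2}=0$ turns the relation into $\mathcal{I}_{,1}\,\mathcal{I}_{;2}=0$, and properness gives $\mathcal{I}_{;2}\neq0$, whence $\mathcal{I}_{,1}=0$; again $F$ is Berwaldian. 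In both sub-cases both scalar derivatives vanish and the claim follows.

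The main obstacle is the other alternative furnished by Theorem \ref{theorem-horizotal-$C$-anisotropic}\textbf{(ii)}, namely $\mathcal{I}=\frac{\varepsilon}{2}(\ln\rho-4\phi)_{;2}$, which by \eqref{transform of main scalar 2} says exactly that $\overline{\mathcal{I}}=0$, i.e. $\overline{F}$ is Riemannian. In that case $\overline{C}^i_{jk}=0$, so the horizontal $\overline{C}$-anisotropic condition holds vacuously and yields no relation between $\mathcal{I}_{,1}$ and $\mathcal{I}_{,2}$; the Landsberg or weak Berwald hypothesis alone then cannot force the Berwald property (this is precisely the unicorn obstruction). I would therefore exclude this branch by restricting to the regime where $\overline{F}$ is non-Riemannian, the only setting in which ``horizontal $\overline{C}$-anisotropic'' carries content. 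A secondary point to pin down carefully is the surface identity weak Berwald $\Leftrightarrow \mathcal{I}_{,2}=0$ underlying the weak Berwald sub-case; I would derive it from the mean Berwald ($E$-)curvature expressed in the modified Berwald frame, in harmony with the characterisation $G^i_km^km_i=0$ used just above.
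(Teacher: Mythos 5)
Your proposal is correct and follows essentially the same route as the paper: both parts are obtained by feeding $\phi=\mathcal{I}$ into Theorem \ref{theorem-horizotal-$C$-anisotropic} and combining the resulting relation with the dictionary Landsberg $\Leftrightarrow\mathcal{I}_{,1}=0$, weak Berwald $\Leftrightarrow\mathcal{I}_{,2}=0$. You are in fact more explicit than the paper on two points it leaves tacit — discarding the Riemannian branches of that theorem, and invoking properness ($\mathcal{I}_{;2}\neq0$) to pass from $\mathcal{I}_{;2}\mathcal{I}_{,1}=0$ to $\mathcal{I}_{,1}=0$ in the weak Berwald sub-case — both of which are genuinely needed for the argument to close.
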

\begin{proof}
\begin{description}
    \item[(i)] Since $(M,F)$ is a Landsberg surface (i.e., $\mathcal{I}_{,1}=0$) and \eqref{conformal factor I} is a horizontal $C$-anisotropic conformal transformation, then  from Proposition \ref{theorem-horizotal-$C$-anisotropic}, we have $\mathcal{I}_{,2}=0$. Thus, $(M,F)$ is a Berwald surface. 
    \item[(ii)] From Proposition \ref{theorem-horizotal-$C$-anisotropic}, the anisotropic conformal change \eqref{conformal factor I} is horizontal $\overline{C}$-anisotropic  if and only if $\mathcal{I}_{,2}=\mathcal{I}_{;2}\mathcal{I}_{,1}$. Then  $(M,F)$ is a Berwald surface if  $(M,F)$ is a Landsberg or weak Berwald surface.   
\end{description}   
\vspace*{-0.9 cm}\[\qedhere\]
\end{proof}

 \section{A Finslerian Schwarzschild-de Sitter solution } \label{Sec_6}
    An anisotropic conformal transformation differs from a conformal transformation in which it can transform a Riemannian metric into a Finslerian metric \cite{first_paper, second_paper}. For instance, consider the Riemannian metric defined on the 2-dimensional sphere by:
$$F(\theta ,\eta ; y^{\theta}, y^{\eta})= \sqrt{(y^{\theta})^2  + \sin^2 (\theta) \, (y^{\eta})^2}.$$ By using the conformal factor 
$$\phi = \ln \left(
\frac{ \sqrt{(1 - a^2 \sin^2 \theta)(y^\theta)^2 + \sin^2 \theta \, (y^\eta)^2} - a \sin^2 \theta \, y^\eta }{ (1 - a^2 \sin^2 \theta)\sqrt{(y^\theta)^2 + \sin^2 \theta \, (y^\eta)^2} }
\right),$$
we get the Finsler metric 
\begin{equation}\label{Finslerian_sphere}
   \overline{F}=e^{\phi}F = \frac{\sqrt{(1-a^2 \,\sin^2 (\theta) )\, (y^{\theta})^2  +\sin^2 (\theta) \, (y^{\eta})^2}}{1-a^2 \sin^2 (\theta) } -\frac{a \sin^2 (\theta) \, y^\eta}{1-a^2 \sin^2 (\theta)},
\end{equation}
where $0\leq a<1$ is a constant known as the Finsler parameter. It is worth noting that $\overline{F}$ is a two dimensional Randers-Finsler space with constant positive
flag curvature $\lambda = 1$ and is homotopy equivalent to the two dimensional sphere with volume $4\pi$, when $a=0$, $\overline{F}=F$. 
The Finslerian analogue of Birkhoff’s theorem states that a Finslerian gravitational field with the symmetry of a \textquotedblleft Finslerian sphere\textquotedblright in vacuum must be static \cite{PRD2014}. Building on this, the Einstein field equations with a non-zero cosmological constant ($\Lambda \neq 0$) have been extended to the Finslerian framework in \cite{Finsler gravity2024}, resulting in a Finslerian Schwarzschild-de Sitter solution that possesses the symmetry of a \textquotedblleft Finslerian sphere\textquotedblright. An example of such a Finslerian sphere is given by the expression \eqref{Finslerian_sphere}.
 We have the following:
 \begin{description}
     \item[(i)] The Finsler metric $F$ is a Riemannian metric of constant curvature and does not satisfy the Hamel’s equations, therefore it is not locally projectively flat.
     \item[(ii)] $F$ is $C$-anisotropic conformal, horizontal $C$-anisotropic, vertical $C$-anisotropic and satisfies the $\phi T$-condition, as $F$ is a Riemannian metric.
     \item[(iii)] In a non-Riemannian Randers space $(M,\overline{F})$, the 1-form is not covariantly constant with respect to the Riemannian metric $\alpha =\frac{\sqrt{(1-a^2 \,\sin^2 (\theta) )\, (y^{\theta})^2  +\sin^2 (\theta) \, (y^{\eta})^2}}{1-a^2 \sin^2 (\theta) } .$ Hence, it is neither Berwaldian nor Landsbergian (by using \cite[Theorem 3.2]{Shenbook16}.)  \\In more details: the Riemannian metric  $a_{ij}$  and its inverse $a^{ij}$ are given by
$$(a_{ij}) = 
\begin{pmatrix}
\frac{1}{ 1 - a^2 \sin^2 \theta } & 0 \\
0 & \frac{\sin^2 \theta}{(1 - a^2 \sin^2 \theta)^2}
\end{pmatrix},
\qquad (a^{ij}) = 
\begin{pmatrix}
1 - a^2 \sin^2 \theta & 0 \\
0 & \frac{(1 - a^2 \sin^2 \theta)^2}{\sin^2 \theta}
\end{pmatrix}.$$
  The corresponding non-zero coefficients $\gamma^h_{ij}$  of the Levi-Civita connection   are given by:
  $$\gamma^\theta_{\theta\theta}=  \frac{  a^2 \sin\theta\,  \cos \theta }{ (1 - a^2 \sin^2 \theta)}    ,\quad \gamma^\eta_{\theta\eta}=  \frac{  \cos \theta (1 + a^2 \sin^2 \theta)}{\sin{\theta} \, (1 - a^2 \sin^2 \theta)}    ,\quad  \gamma^\theta_{\eta\eta}= -\frac{ \cos \theta\, \sin{\theta \, (1 + a^2 \sin^2 \theta)}}{  (1 - a^2 \sin^2 \theta)^2}    .$$
Since,   the 1-form \( \beta = b_i \, y^i \),  with $b_{\theta}=0,\, b_{\eta}=\frac{-a\sin{\theta}}{1-a^2\sin^2{\theta}}$,  the covariant derivative  of $b_\theta$ with respect to the Levi-Civita connection is given by:
\begin{align*}
\nabla_\eta b_\theta =& \partial_\eta b_\theta -\gamma^\theta_{\theta\eta} b_\theta- \gamma^\eta_{\theta\eta} b_\eta
=-  \frac{-a\sin{\theta}}{(1-a^2\sin^2{\theta})}\frac{  \cos \theta (1 + a^2 \sin^2 \theta)}{\sin{\theta} \, (1 - a^2 \sin^2 \theta)} = \frac{a \cos \theta (1 + a^2 \sin^2 \theta) }{ (1 - a^2 \sin^2 \theta )^2 } .
\end{align*}
Therefore,  the $1$-form is not covariantly constant with respect to the Riemannian metric $\alpha$.

     \item[(iv)]  The Finsler metric $\overline{F}$ is of constant curvature $\lambda=1$ and not locally projectively flat \cite[Theorem 1.1]{Shen_2_dim_randers}. 
    \item[(v)]By using Mapple’s package uploaded to
    
\url{https://github.com/salahelgendi/Special_anisotropics_Section_6}    

     we get $m^i\partial_i\phi-\varepsilon \phi_{;2}\ell^i\partial_i\phi\neq0$ and $\phi_{,2}\neq\phi_{;2}\phi_{,1}$. According to Proposition \ref{proposition1 of bar$FC$-conformal} and Theorem \ref{theorem-horizotal-$C$-anisotropic}, the anisotropic conformal change \eqref{Finslerian_sphere} is not $\overline{C}$-anisotropic conformal and does not satisfy the horizontal $\overline{C}$-anisotropic property,  as $\overline{F}$ is not Riemannian. Also, \eqref{Finslerian_sphere} is not a vertical $\overline{C}$-anisotropic conformal change.
     \item[(vii)] As $F$ is a Riemannian metric, it admits a semi-concurrent vector field.  However, $\overline{F}$ is not (as it is a non-Riemannian metric).
 \end{description}

\section{Concluding Remarks}\label{Concluding Remarks}
In conformal theory, special conformal transformations like $C$-conformal transformations have been studied. This transformation depends on the Cartan tensor (which is invariant under the conformal change) and the conformal factor (which only depends on position). The Cartan tensor is not invariant under anisotropic conformal transformations and the conformal factor depends on the position and direction. This motivated us to study  $C$-anisotropic, $\overline{C}$-anisotropic, horizontal $C$-anisotropic and horizontal $\overline{C}$-anisotropic transformations.\\
Under the anisotropic conformal transformation $\overline{F}=e^{\phi}F$ defined by \eqref{the anisotropic conformal transformation}, the following points are to be highlighted:
\begin{description}
    \item[(i)] We have characterized $C$-anisotropic, $\overline{C}$-anisotropic, horizontal $C$-anisotropic and horizontal $\overline{C}$-anisotropic transformations.
\item[(ii)] We have determined the conditions under which the properties of $C$-anisotropic and horizontal $C$-anisotropic are preserved under the anisotropic conformal change \eqref{the anisotropic conformal transformation}.
\item [(iii)] Let \eqref{the anisotropic conformal transformation} be the anisotropic conformal transformation between two non-Riemannian surfaces then we have: 
\begin{description}
    \item[(a)] If the property of $C$-anisotropic conformal is preserved, then $F$ is projectively flat  if and only if  $\phi$ is a first integral of the geodesic spray $S.$
    \item[(b)]If $F$ is a flat metric and the anisotropic conformal transformation \eqref{the anisotropic conformal transformation} is $\overline{C}$-anisotropic conformal change, then the two sprays $S$ and $\overline{S}$ are projectively equivalent if and only if $\phi_{;2}$ is a first integral of the geodesic spray $S.$ 
    \item[(c)] If \eqref{the anisotropic conformal transformation} satisfies the horizontal $C$-anisotropic (or horizontal $\overline{C}$-anisotropic), the geodesic spray is invariant if and only if the conformal factor is first integral of the geodesic spray~$S$.
    \item[(d)] If \eqref{the anisotropic conformal transformation} satisfies the horizontal $\overline{C}$-anisotropic, the two geodesic sprays $S$ and $\overline{S}$ are projectively equivalent if and only if  $\phi_{;2}$ is a first integral of the geodesic spray $S$.
    \item[(e)] If the conformal factor is a function of $x$ only, the $C$-anisotropic, $\overline{C}$-anisotropic, horizontal $C$-anisotropic and horizontal $\overline{C}$-anisotropic transformations are reduced to the well-known $C$-conformal transformation. However, the vertical $C$-anisotropic conformal transformation does not reduce to $C$-conformal and it characterizes the property of $F$ to be a Riemannian metric. 
\end{description}
\end{description}
Secondly, in conformal transformation, a smooth non-constant function $\boldsymbol{\sigma}$ on $M$ that satisfies certain conditions with the $T$-tensor is called the $\boldsymbol{\sigma} T$-condition. Based on the fact that the $T$-tensor is not invariant under the anisotropic conformal transformation \eqref{the anisotropic conformal transformation} and the conformal factor depend on position and direction arguments, we discuss under what condition \eqref{the anisotropic conformal transformation} satisfies $\phi T$-condition, $\phi\overline{T}$-condition, horizontal $\phi T$-condition and horizontal $\phi\overline{T}$-condition. This is achieved through the existence of a function $\phi$ that satisfies certain conditions with the T and $\overline{T}$-tensors.
\begin{description}
    \item[(i)] We have established  the characterization of the anisotropic conformal transformation \eqref{the anisotropic conformal transformation} satisfies the $\phi T$-condition, $\phi\overline{T}$-condition, horizontal $\phi T$-condition and horizontal $\phi\overline{T}$-condition.
     \item[(ii)] Since any Landsberg metric satisfying the $\boldsymbol{\sigma} T$-condition is Berwaldian, we have found under what conditions a Landsberg metric $F$ (resp. $\overline{F}$) that satisfies $\phi T$-condition or horizontal $\phi T$-condition  (resp. $\phi\overline{T}$-condition or  and horizontal $\phi\overline{T}$-condition) is Berwaldian.
     \item[(iii)] Under a vertical $C$-anisotropic conformal change with horizontally constant $\phi_{;2}$, the Finsler metric \(\overline{F}\) is Landsbergian if and only if the transformation satisfies the horizontal $\phi\overline{T}$-condition, which is also equivalent to \(\overline{F}\) being Berwaldian or the two geodesic sprays $S$ and $\overline{S}$ are projectively equivalent.
     \item[(iv)] The property that the anisotropic conformal transformation \eqref{the anisotropic conformal transformation} satisfies the vertical $\phi T$-condition is equivalent to that it satisfies the $T$-condition. 
     \item[(v)] If the conformal factor depends on the position only, the $\phi T$-condition, $\phi\overline{T}$-condition, horizontal $\phi T$-condition and horizontal $\phi\overline{T}$-condition are simplified to the notion of  $\boldsymbol{\sigma} T$-condition. However, the $\phi T$-condition does not simplify to the $\boldsymbol{\sigma} T$-condition and is equivalent to the $T$-condition.  
\end{description}


\vspace{.5 cm}
Finally, we provide the following table which summarizes all special anisotropic conformal transformations that we studied.

\begin{center}
\textbf{Table: Equivalence of special anisotropic conformal changes  }
\end{center}
\begin{tabular}{ |p{4.7cm}|p{9cm}|  }
\hline
Special anisotropic change  & equivalent conditions\\
\hline
\hline
$C$-anisotropic  & F is Riemannian or $m^i\partial_i\phi=0$\\
\hline
$\overline{C}$-anisotropic  & $\overline{F}$ is Riemannian or $m^i\partial_i\phi-\varepsilon\phi_{;2}\ell^i\partial_i\phi=0$\\
\hline
horizontal $C$-anisotropic  & F is Riemannian or $\phi_{,2}=0$\\
\hline
horizontal $\overline{C}$-anisotropic  & $\overline{F}$ is Riemannian or $\phi_{,2}=\phi_{;2}\phi_{,1}$\\
\hline
vertical $C$-anisotropic  & F is Riemannian \\
\hline
\hline
$\phi T$-condition & F has vanishing $T$-tensor or $m^i\partial_i\phi=0$\\
\hline
$\phi\overline{T}$-condition  & $\overline{F}$ has vanishing $\overline{T}$-tensor or $m^i\partial_i\phi=0$\\
\hline
horizontal $\phi T$-condition & F has vanishing $T$-tensor or $\phi_{,2}=0$\\
\hline
horizontal $\phi\overline{T}$-condition  & $\overline{F}$ has vanishing $\overline{T}$-tensor or $\phi_{,2}=\phi_{;2}\phi_{,1}$\\
\hline
vertical $\phi$T  & F is Riemannian \\
\hline
\end{tabular}

\end{document}